\documentclass[11pt,a4paper,draft]{article}
\usepackage{a4}

\usepackage{mathrsfs} %mathscr
\usepackage{amssymb} %Box
\usepackage{amsmath} %Operator
\usepackage{amsthm} %proof
\usepackage{stmaryrd} %bracket
\usepackage{xcolor}
\usepackage{graphicx}
\usepackage{hyperref}

\usepackage{tikz}
\usetikzlibrary{positioning}

\newtheorem{theorem}{Theorem}
\newtheorem{proposition}[theorem]{Proposition}

\newtheorem{conjecture}[theorem]{Conjecture}
\newtheorem{claim}[theorem]{Claim}
\newenvironment{claimproof}[1]{\par\noindent\emph{Proof#1.}\space}{\hfill $\Diamond$\medskip}

\newcommand{\bigo}[1]{O\left(#1\right)}

\DeclareMathOperator{\girth}{girth}

%%%%%%%%%%%%%%%%%%%%%%%%%%%%%%%%%%%%%%%%%%%%%%%%%%%%%%%%%%%%%%%%%%%%%%

\title{Colouring powers and girth}

\author{
 Ross J. Kang
\thanks{Radboud University Nijmegen, Netherlands. Email: {\tt ross.kang@gmail.com}.}
 \and
 Fran\c{c}ois Pirot
\thanks{Ecole Normale Sup\'erieure de Lyon, France. Email: {\tt francois.pirot@ens-lyon.fr}.}
}

\begin{document}
\maketitle

\begin{abstract}
Alon and Mohar (2002) posed the following problem: among all graphs $G$ of maximum degree at most $d$ and girth at least $g$, what is the largest possible value of $\chi(G^t)$, the chromatic number of the $t$\textsuperscript{th} power of $G$? For $t\ge 3$, we provide several upper and lower bounds concerning this problem, all of which are sharp up to a constant factor as $d\to \infty$.
The upper bounds rely in part on the probabilistic method, while the lower bounds are various direct constructions whose building blocks are incidence structures.

%  Keywords: graph colouring, distance colouring, girth, graph powers, incidence structures.
  
%  AMS 2010 codes: 
%  05C15 (primary), %Coloring of graphs and hypergraphs
%  05C35, %Extremal problems
%  05C70 (secondary). %Factorization, matching, partitioning, covering and packing
\end{abstract}

%%%%%%%%%%%%%%%%%%%%%%%%%%%%%%%%%%%%%%%%%%%%%%%%%%%%%%%%%%%%%%%%%%%%%%

\section{Introduction}\label{sec:intro}

For a positive integer $t$, the {\em $t$-th power} $G^t$ of a (simple) graph $G = (V,E)$ is a graph with vertex set $V$ in which two distinct elements of $V$ are joined by an edge if there is a path in $G$ of length at most $t$ between them.
What is the largest possible value of the chromatic number $\chi(G^t)$ of $G^t$, among all graphs $G$ with maximum degree at most $d$ and girth (the length of the shortest cycle contained in the graph) at least $g$?

For $t=1$, this question was essentially a long-standing problem of Vizing~\cite{Viz68}, one that stimulated much work on the chromatic number of bounded degree triangle-free graphs, and was eventually settled asymptotically by Johansson~\cite{Joh96} using the probabilistic method. In particular, he showed that the largest possible value of the chromatic number over all girth $4$ graphs of maximum degree at most $d$ is $\Theta(d/\log d)$ as $d\to \infty$.

The case $t=2$ was considered and settled asymptotically by Alon and Mohar~\cite{AlMo02}. They showed that the largest possible value of the chromatic number of a graph's square taken over all girth $7$ graphs of maximum degree at most $d$ is $\Theta(d^2/\log d)$ as $d\to \infty$. Moreover, there exist girth $6$ graphs of arbitrarily large maximum degree $d$ such that the chromatic number of their square is $(1+o(1))d^2$ as $d\to\infty$.

In this work, we consider this extremal question for larger powers $t\ge 3$, which was posed as a problem in~\cite{AlMo02}, and settle a range of cases for $g$.

A first basic remark to make is that, ignoring the girth constraint, the maximum degree $\Delta(G^t)$ of $G^t$ for $G$ a graph of maximum degree at most $d$ satisfies
\begin{align*}
\Delta(G^t)
\le d\sum_{i=1}^{t-1}(d-1)^i
\le d^t,
\end{align*}
and therefore we have the following as a trivial upper bound for our problem:
\begin{align}\label{eqn:basic}
\chi(G^t) \le \Delta(G^t) + 1 \le d^t + 1.
\end{align}
This bound is sharp up to a $1+o(1)$ factor (as $d\to\infty$) for $t = 1$ and $g=3$, for $t=2$ and $g\le 6$ (as $d\to\infty$), but only two other cases for $t$ and $g$ have been settled to this precision, by examples for the so-called degree diameter problem, cf.~\cite{MiSi13}.
Recall that the De Bruijn graph of dimension $n$ on an alphabet $\Sigma$ of size $k$ is the directed graph whose vertices are the words of $\Sigma^n$ and whose arcs link the pairs $(a.u, u.b)$ for all $a,b \in \Sigma$, $u \in \Sigma^{n-1}$. For all $d$ even, the undirected and loopless version of the De Bruijn graph of dimension $t$ on an alphabet of size $d/2$ contains $d^t/2^t$ vertices, and every pair of its vertices can be linked with a path of length at most $t$; this certifies the general upper bound~\eqref{eqn:basic} to be sharp only up to a $(1+o(1))2^t$ factor.
It is known, via the degree diameter problem, that this factor can be improved upon in many cases for $t$.
However, De Bruijn graphs (and other constructions) have many short cycles and we are mostly interested here in whether the bound in~\eqref{eqn:basic} can be attained up to a constant factor by, or instead significantly lowered for, those graphs $G$ having some prescribed girth.

Alon and Mohar showed that the largest possible value of the chromatic number $\chi(G^t)$ of $G^t$, among all graphs $G$ with maximum degree at most $d$ and girth at least $3t+1$ is $\Theta(d^t/\log d)$ as $d\to \infty$. Kaiser and the first author~\cite{KaKa14} remarked that the same statement with $3t+1$ replaced by $2t+3$ could hold. In our first result, we make a further improvement by proving it necessary to exclude only the cycles of length $6$ if $t=2$ or of length in $\{8,10,\dots,2t+2\}$ when $t\ge 3$ in order to obtain an asymptotic reduction upon the bound in~\eqref{eqn:basic}.

\begin{theorem}\label{thm:cyclefree}
The largest possible value of the chromatic number $\chi(G^t)$ of $G^t$, taken over all graphs $G$ of maximum degree at most $d$ containing as a subgraph no cycle of length $6$ for $t=2$ or of length in $\{8,10,\dots,2t+2\}$ for $t\ge 3$ is $\Theta(d^t/\log d)$ as $d\to \infty$.
\end{theorem}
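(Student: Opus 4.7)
The plan is to prove the theorem in two halves: the upper bound $\chi(G^t) = O(d^t/\log d)$, valid for every $G$ in the stated class, and a matching lower bound given by an explicit or random construction.

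For the upper bound, I would reduce to a Johansson--Molloy-style coloring result for locally sparse graphs: if $H$ has maximum degree $\Delta$ and every neighborhood of $H$ spans at most $\Delta^2/f$ edges, then $\chi(H) = O(\Delta/\log f)$. Applied to $H := G^t$, which has $\Delta(G^t) \le d^t$, this reduces the theorem to a combinatorial sparsity lemma: for some fixed $c > 0$ and every vertex $w$ of $G$, the number of pairs $(u,v)$ with $u \neq v$, $u,v \in B_t(w) \setminus \{w\}$, and $d_G(u,v) \le t$ is at most $d^{2t-c}$. This would yield $f = d^c$ and hence $\chi(G^t) = O(d^t/\log d)$. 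To prove the sparsity lemma, fix such a pair and write $i = d_G(u,w)$, $j = d_G(v,w)$, $k = d_G(u,v)$, all in $[1,t]$. The three shortest paths $P_{uw}, P_{vw}, P_{uv}$ combine into a closed walk in $G$ of length $i+j+k \le 3t$; in the generic case where these paths are pairwise internally vertex-disjoint, this walk is a cycle of length $i+j+k$ in $G$, and when the paths share interior vertices the combined structure decomposes into a union of shorter cycles and tree fragments. The forbidden cycle lengths $\{8,10,\dots,2t+2\}$ for $t\ge 3$ (resp.\ $\{6\}$ for $t=2$) are chosen precisely to eliminate those configurations $(i,j,k)$ that would otherwise contribute $\Theta(d^{2t})$ to the neighborhood edge-count; the remaining configurations can then be bounded by $O(d^{2t-c})$ using only the naive $|B_i(w)| \le d^i$ estimates.

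For the lower bound, I would produce, for arbitrarily large $d$, graphs of maximum degree at most $d$ avoiding all the forbidden cycles with $\chi(G^t) = \Omega(d^t/\log d)$. One route is a random $d$-regular graph with the few expected copies of forbidden cycles destroyed by edge deletions; a standard first-moment/greedy argument then bounds $\alpha(G^t) = O(n\log d / d^t)$, giving $\chi(G^t) \ge n/\alpha(G^t) = \Omega(d^t/\log d)$. Alternatively, as the abstract suggests, one can take incidence structures (e.g.\ incidence graphs of generalized polygons, whose girth is well understood) to get the same bound with a fully explicit construction.

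I expect the principal obstacle to be the sparsity lemma itself. The case analysis of how $P_{uw}, P_{vw}, P_{uv}$ can overlap is intricate: one must enumerate the configuration types (theta-graph on $u,v,w$; one path lying on the union of the other two; paths sharing an initial segment at $u$, $v$, or $w$; and so on), identify which forbidden cycle length eliminates each bad type, and check that the surviving (non-forbidden) types contribute only $O(d^{2t-c})$ pairs in aggregate. Matching the precise set $\{8,10,\dots,2t+2\}$ to exactly the configurations that must be forbidden, rather than a larger but easier-to-use set of cycle lengths, is where the sharpest combinatorial bookkeeping will be required.
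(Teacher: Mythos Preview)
Your high-level reduction is exactly right and matches the paper: apply the Alon--Krivelevich--Sudakov locally-sparse bound to $G^t$ with $\hat\Delta = d^t$, so the whole problem becomes showing that every $G^t$-neighbourhood spans $O(d^{2t-1})$ edges. The lower bound is also fine; the paper simply cites Alon--Mohar's random construction.

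The gap is in the sparsity lemma. Your proposed mechanism --- form the closed walk $P_{uw}\cup P_{vw}\cup P_{uv}$ of length $i+j+k$ and use the forbidden cycle lengths to kill the dominant configurations --- does not work with only $\{8,10,\dots,2t+2\}$ forbidden. The configurations that potentially contribute $\Theta(d^{2t})$ are precisely those with $i=j=t$, and then the ``generic'' (internally disjoint) case yields a cycle of length $2t+k$; for every $k\ge 3$ this length exceeds $2t+2$ and is \emph{not} forbidden, so nothing in your scheme eliminates it. Allowing the three paths to overlap does not rescue the argument either: one can have $u,v\in A_t$, a last common ancestor at depth $0$, and a $uv$-path of length $k$ staying in $A_t\cup A_{t+1}$, producing only cycles of length $>2t+2$.

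The paper's argument is genuinely different from a three-paths case analysis. Working in the BFS layering $A_0,A_1,\dots$ from $w$, it first proves two structural claims: the forbidden cycle lengths force $G[A_{t-1}\cup A_t]$ and $G[A_t\cup A_{t+1}]$ to contain no path of length~$6$ whose odd-indexed vertices lie in the inner layer (the proof uses last common ancestors in the BFS tree and is where each length in $\{8,\dots,2t+2\}$ is actually used). These claims are then fed into an \emph{averaging} argument: call a vertex of $A_t$ with $\ge 4$ neighbours in $A_{t-1}$, or an edge from $A_t$ to a vertex of $A_{t+1}$ with $\ge 4$ neighbours in $A_t$, or an edge inside $A_t$, a ``bottleneck''; the $6$-path-free claims (plus Erd\H{o}s--Gallai for the last type) imply there are only $O(d^{t-1})$ or $O(d^t)$ bottlenecks of each type, hence $O(d^{2t-1})$ $t$-paths through bottlenecks. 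Any $t$-path with both endpoints in $A_t$ that avoids all bottlenecks is forced, at its first step out of $A_t$ or first return to $A_t$, into one of at most $3t$ choices, giving $O(d^{2t-1})$ such paths as well. It is this bottleneck-and-averaging structure, not a theta-graph configuration count, that makes the precise set $\{8,10,\dots,2t+2\}$ sufficient.
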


For $t=3$, this says that the chromatic number of the cube of a graph of maximum degree at most $d$ containing no cycle of length $8$ is $\Theta(d^3/\log d)$ as $d\to \infty$.
The largest forbidden cycle length $2t+2$ in Theorem~\ref{thm:cyclefree} may not in general be reduced to $2t+1$ or $2t$ because of the girth $6$ examples mentioned for case $t=2$. 
We made no effort to optimise the constant factors implicit in the $\Theta(d^t/\log d)$ term of Theorem~\ref{thm:cyclefree}, although doing so could be of interest in, say, the $t=2$ and $t=3$ cases.
We prove Theorem~\ref{thm:cyclefree} in Section~\ref{sec:cyclefree}.

We make a side remark that, with respect to the case $t=1$, excluding {\em any} fixed cycle length is sufficient for a logarithmic improvement over~\eqref{eqn:basic}.
\begin{proposition}\label{prop:cyclefree}
Let $k\ge 3$.
The largest possible value of the chromatic number $\chi(G)$ of $G$, taken over all graphs $G$ of maximum degree at most $d$ containing as a subgraph no cycle of length $k$ is $\Theta(d/\log d)$ as $d\to \infty$.
\end{proposition}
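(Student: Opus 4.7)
The plan is to prove matching upper and lower bounds of order $d/\log d$. The key observation for the upper bound is that forbidding $C_k$ forces linear sparsity in every neighbourhood. Indeed, if $G$ contains no $k$-cycle then for each $v \in V(G)$ the induced subgraph $G[N(v)]$ cannot contain a path on $k-1$ vertices, for such a path $u_1 u_2 \cdots u_{k-1}$ would close up with $v$ into a cycle $v u_1 u_2 \cdots u_{k-1} v$ of length exactly $k$. The Erd\H{o}s--Gallai theorem on paths then gives $|E(G[N(v)])| \le \tfrac{1}{2}(k-3)d$ for $k \geq 4$, and $|E(G[N(v)])| = 0$ for $k = 3$; in every case, each neighbourhood spans $O_k(d)$ edges, i.e.\ $|E(G[N(v)])| \le d^2/f$ for some $f = \Omega_k(d)$.

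One then invokes a Johansson-style chromatic bound for locally sparse graphs, such as the result of Alon, Krivelevich and Sudakov stating that any graph of maximum degree $d$ with at most $d^2/f$ edges in every neighbourhood satisfies $\chi(G) = O(d/\log f)$. With $f = \Omega_k(d)$ this yields $\chi(G) = O(d/\log d)$, where the implicit constant depends on $k$; the case $k = 3$ reduces to Johansson's theorem directly.

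For the matching lower bound, sample $G \sim G(n, d/n)$ with $n = \lceil d^{k+1}\rceil$, so $p = d^{-k}$. Standard calculations ensure, with positive probability, maximum degree at most $2d$, independence number $\alpha(G) = O((n/d)\log d)$, and only $O(d^k) = o(n)$ copies of $C_k$. Deleting a vertex from each $C_k$ yields a $C_k$-free subgraph $G'$ on $(1-o(1))n$ vertices with $\alpha(G') \le \alpha(G)$, so $\chi(G') \geq |V(G')|/\alpha(G') = \Omega(d/\log d)$. A constant rescaling of $d$ produces the stated maximum degree bound. The principal hurdle is the upper bound, where the key new input is the short-path observation in neighbourhoods; once that is in place, everything reduces to a black-box invocation of an existing locally sparse chromatic bound, and the lower bound is a routine random-graph-plus-alteration argument.
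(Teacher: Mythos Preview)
Your proof is correct and follows essentially the same route as the paper: the upper bound is identical (no $P_{k-1}$ in any neighbourhood, Erd\H{o}s--Gallai, then the Alon--Krivelevich--Sudakov locally sparse bound), and for the lower bound the paper simply cites standard high-girth probabilistic examples while you spell out a random-graph-plus-alteration argument removing only the $C_k$'s, which is a minor variant of the same idea.
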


Before continuing, we introduce some abbreviating notation:
\begin{align*}
\chi^t_g(d) := \max\{\chi(G^t) : \Delta(G) \le d\text{ and }g(G) \ge g\},
\end{align*}
where $\Delta(G)$ is the maximum degree of the graph $G$ and $g(G)$ is its girth.
In this language, we have $\chi^1_3(d) \sim d$, while $\chi^1_4(d) = \Theta(d/\log d)$, and $\chi^2_6(d) \sim d^2$, while $\chi^2_7(d) = \Theta(d^2/\log d)$.
Alon and Mohar showed $\chi^t_g(d) = \Omega(d^t/\log d)$ for every $t$ and $g$, and Theorem~\ref{thm:cyclefree} thus implies that $\chi^t_{2t+3}(d) = \Theta(d^t/\log d)$ as $d\to\infty$.

A motivating conjecture for us is one of Alon and Mohar, asserting that for every $t$ there is a critical girth $g_t$ such that $\chi^t_{g_t}(d) = \Theta(d^t)$ and $\chi^t_{g_t+1}(d) = \Theta(d^t/\log d)$, just as for $t=1$ ($g_1 = 3$) and $t=2$ ($g_2 = 6$). 
We are not aware of any previous work, for any $t\ge 3$, showing that $g_t$, if it exists, is greater than $3$.

Our second contribution in this work is to give $\Omega(d^t)$ lower bounds on $\chi^t_g(d)$ for various choices of $t$ and $g (\le 2t+2)$. We show, in particular, that $g_t$, if it exists, is at least $4$ for $t=3$, at least $6$ for all $t \ge 4$, and at least $8$ for all $t\ge 11$.

\begin{theorem}\label{thm:constructions}
There are constructions to certify the following statements hold.
\begin{enumerate}
\item
$\chi^3_4(d) \gtrsim d^3/2^3$ as $d\to\infty$ and $\chi^3_3(d) \gtrsim 3d^3/2^3$ for infinitely many $d$;
\item
$\chi^4_6(d) \gtrsim d^4/2^4$ as $d\to\infty$ and $\chi^4_4(d) \gtrsim 2d^4/2^4$ as $d\to\infty$;
\item
$\chi^5_6(d) \gtrsim d^5/2^5$ as $d\to\infty$ and $\chi^5_4(d) \gtrsim 5d^5/2^5$ for infinitely many $d$;
\item
$\chi^6_6(d) \gtrsim d^6/2^6$ as $d\to\infty$ and $\chi^6_6(d) \gtrsim 3d^6/2^6$ for infinitely many $d$;
\item
$\chi^7_6(d) \gtrsim 2d^7/2^7$ as $d\to\infty$;
\item
$\chi^8_6(d) \gtrsim d^8/2^8$ as $d\to\infty$ and $\chi^8_6(d) \gtrsim 3d^8/2^8$ for infinitely many $d$;
\item
$\chi^{10}_6(d) \gtrsim d^{10}/2^{10}$ as $d\to\infty$ and $\chi^{10}_6(d) \gtrsim 5d^{10}/2^{10}$ for infinitely many $d$;
\item
for $t = 9$ or $t\ge 11$, $\chi^t_8(d) \gtrsim d^t/2^t$ as $d\to\infty$, $\chi^t_8(d) \gtrsim 3d^t/2^t$ for infinitely many $d$,
and, if $5 | t$, then $\chi^t_8(d) \gtrsim 5d^t/2^t$ for infinitely many $d$.
\end{enumerate}
Moreover, these constructions are bipartite if $t$ is even.
\end{theorem}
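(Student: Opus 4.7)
The plan is to establish each of the eight claims by an explicit construction. For a given pair $(t,g)$ I would build a graph $G$ with $\Delta(G)\le d$ and $\girth(G)\ge g$ whose $t$-th power has chromatic number at least the target bound. The simplest route is to make $G$ have diameter at most $t$, so that $G^t$ is complete and $\chi(G^t)=|V(G)|$; it then suffices to count vertices.

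The benchmark is the undirected De Bruijn graph over an alphabet of size $d/2$: it has maximum degree $d$, diameter $t$, and $d^t/2^t$ vertices, but its girth is only $3$. For each target girth $g\ge 4$ I would replace or modify it with a graph built from incidence geometry whose girth is at least $g$ while the number of vertices stays of order $d^t/2^t$. The key building blocks are bipartite double covers of De Bruijn-type graphs (triangle-free, for $g=4$), incidence graphs of finite projective planes of order $q$ (bipartite, $\girth\ge 6$, diameter $3$, with $\sim 2q^2$ vertices), and incidence graphs of generalized quadrangles of order $q$ (bipartite, $\girth\ge 8$, diameter $4$, with $\sim 2q^3$ vertices). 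These structures are classical and exist for every prime-power $q$.

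For each item (i)--(viii), the construction would be obtained by splicing a De Bruijn-type shift on top of one of these bipartite blocks, so that a walk of length $t$ in $G$ decomposes into a few ``block'' hops and a few ``shift'' hops and thereby reaches every vertex, while any short cycle is trapped inside a single bipartite block and hence has length at least $g$. The constants $2$, $3$ and $5$ in the claimed bounds reflect specific low-order instances used in the splice: an extra factor of $2$ from the bipartiteness of a block, and factors of $3$ and $5$ from using order-$3$ and order-$5$ incidence structures (which require the corresponding prime-power $q$); this is why the sharper bounds hold only for infinitely many $d$. The ``as $d\to\infty$'' versions follow from an interpolation taking $q$ to be the largest suitable prime power at most $d/2$, losing only a $1+o(1)$ factor via the classical density of prime powers. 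Because all blocks used are bipartite and are spliced consistently, the resulting $G$ is bipartite whenever $t$ is even; for odd $t$ an inevitable odd-length shift component breaks bipartiteness.

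The principal technical obstacle is the girth verification for the spliced graphs. One must confirm that no potential short cycle uses edges from more than one block to ``short-circuit'' the block girth through the shifts. I would handle this by decorating vertices with a level label and showing that any cycle must traverse the levels so as to close inside a single block, whose girth is $\ge g$ by construction. Obtaining the exact constants $2$, $3$ and $5$ then reduces to careful case-by-case bookkeeping of vertex counts, but relies on no further ideas beyond the input incidence geometry.
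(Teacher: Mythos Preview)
Your proposal has the right flavour --- build graphs out of incidence structures so that a designated subset forms a clique in $G^t$ --- and several of the paper's constructions do follow this template. But there are two concrete gaps.

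First, your account of the constants $3$ and $5$ is wrong. They do not arise from ``order-$3$'' or ``order-$5$'' incidence structures. In the paper, the extra factor $3$ for $\chi^3_3$ comes from gluing three copies of the incidence graph $\mathcal{Q}_q$ of a generalized quadrangle end-to-end around a cycle of length $3$; by self-duality of $\mathcal{Q}_q$ (which holds when $q$ is a power of $2$, hence ``infinitely many $d$''), every pair of vertices across all three parts is joined by a $3$-path, so the whole vertex set of size $3n\sim 3(d/2)^3$ is a clique in the cube. The factor $5$ for $\chi^5_4$ is analogous, using five copies of the incidence graph $\mathcal{H}_q$ of a split Cayley hexagon (self-dual when $q$ is a power of $3$). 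For the larger $t$, the factors $3$ and $5$ arise from ``unfolding'' each coordinate of a circular product construction into three or five interleaved segments, again requiring self-duality; this is what upgrades a one-part clique $U^{(0)}$ to a three- or five-part clique $U^{(0)}\cup U^{(\lambda)}\cup\cdots$. None of this has anything to do with the order of a projective plane or quadrangle being $3$ or $5$.

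Second, your list of building blocks is incomplete in a way that blocks several items. You mention projective planes ($\tau=2$, girth $6$) and generalized quadrangles ($\tau=3$, girth $8$), but the paper relies essentially on the split Cayley hexagon $\mathcal{H}_q$ ($\tau=5$, girth $12$). Without it you cannot get $\chi^5_6(d)\gtrsim d^5/2^5$ via a matching contraction (your quadrangle block has diameter only $4$), you cannot realise $\chi^5_4\gtrsim 5d^5/2^5$, and crucially you cannot handle item (viii): the girth-$8$ constructions for $t=9$ or $t\ge 11$ work by writing $t$ as a sum of at least three terms from $\{3,5\}$ and then performing the three-copy unfolding so that no two copies of the same coordinate are adjacent on the main cycle --- this is exactly what kills the $6$-cycles and pushes the girth to $8$. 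Your ``splicing a De Bruijn shift on top of a block'' narrative does not supply this mechanism, and the claim that short cycles are ``trapped inside a single bipartite block'' is false in the naive product: cycles of length $8$ (and, without the unfolding, of length $6$) can and do cross blocks, as the paper explicitly notes at the end of the proof of Theorem~\ref{thm:circularmain}.

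Finally, the blanket plan of making $G$ itself have diameter $\le t$ is too strong: in most of the circular constructions only a designated part $U^{(0)}$ (or a union of a few parts) is a clique in $G^t$, not the whole graph.
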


These lower bounds are obtained by a few different direct methods, including a circular construction (Section~\ref{sec:circular}) and two other somewhat ad hoc methods (Section~\ref{sec:cliques}).

A summary of current known bounds for Alon and Mohar's problem is given in Table~\ref{tab:summary}. 
When reflecting upon the gaps between entries in the upper and lower rows, one should keep in mind that among graphs $G$ of maximum degree at most $d$ and of girth lying strictly within these gaps, the current best upper and lower bounds on the extremal value of $\chi(G^t)$ are off by only a $\log d$ factor from one another.
We would be intrigued to learn of any constructions that certify $\liminf_{t\to\infty}g_t = \infty$, or of any upper bound on $\limsup_{t\to\infty}(g_t+1)/t$ strictly less than $2$.

\begin{table}
\centering
\setlength\tabcolsep{5pt}
\begin{tabular}{| r | c c c c c c c c c c c |}
\hline
$t$          & 1 & 2 & 3 & 4  & 5  & 6  & 7  & 8  & 9  & 10 & $\ge 11$\\
\hline
$g_t \ge$    & 3 & 6 & 4 & 6  & 6  & 6  & 6  & 6  & 8  & 6  & 8 \\
\hline
$g_t+1 \le $ & 4 & 7 & 9 & 11 & 13 & 15 & 17 & 19 & 21 & 23 & $2t+3$\\
\hline
\end{tabular}
\caption{Bounds on the conjectured critical girth $g_t$ (if it exists). \label{tab:summary}}
\end{table}

\section{An upper bound for graphs without certain cycles}\label{sec:cyclefree}

The proof of Theorem~\ref{thm:cyclefree} relies on the following result due to Alon, Krivelevich and Sudakov~\cite{AKS99}, showing an upper bound on the chromatic number of a graph whose maximum neighbourhood density is bounded. This result invokes Johannson's result for triangle-free graphs and is thereby reliant on the probabilistic method.

\begin{theorem}[\cite{AKS99}]\label{thm:AKSsparse}
For all graphs $\hat{G} = (\hat{V},\hat{E})$ with maximum degree at most $\hat{\Delta}$ such that for each $\hat{v}\in \hat{V}$ there are at most $\frac{\hat{\Delta}^2}{f}$ edges spanning $N(\hat{v})$, it holds that $\chi(\hat{G}) = \bigo{\frac{\hat{\Delta}}{\log f}}$ as $\hat{\Delta}\to\infty$.
\end{theorem}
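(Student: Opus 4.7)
The plan is to build a proper $k$-colouring with $k = \bigo{\hat{\Delta}/\log f}$ by an iterative semi-random algorithm in the spirit of Johansson's proof for triangle-free graphs, with the sparse-neighbourhood hypothesis substituting for triangle-freeness as the source of per-round colour savings.

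First I would fix a palette $[k]$ of $k = C\hat{\Delta}/\log f$ colours for a sufficiently large constant $C$, initialise a list $L(v) = [k]$ for each vertex $v$, and iterate rounds of partial random colouring. In each round, every still-uncoloured $v$ picks a tentative colour uniformly from $L(v)$; it keeps the colour and becomes permanently coloured if no neighbour picked the same colour, otherwise it discards it and stays uncoloured; every newly permanent colour is then struck from the lists of uncoloured neighbours.

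The crucial quantities to track are $\ell(v) := |L(v)|$ and the number $D(v)$ of still-uncoloured neighbours of $v$. In the triangle-free Johansson setting, the tentative colours of the neighbours of $v$ are independently chosen and no two neighbours of $v$ are adjacent, so the expected shrinkage rates of $\ell(v)$ and $D(v)$ can be computed cleanly, and one shows that the ratio $D(v)/\ell(v)$ decreases by a constant factor per round. In the sparse-neighbourhood setting of Theorem~\ref{thm:AKSsparse}, the same computation goes through up to a correction arising from the at most $\hat{\Delta}^2/f$ pairs of adjacent neighbours of $v$: each such pair contributes only an $\bigo{1/\ell^2}$ second-order term to the list-shrinkage rate, so the total correction is of size $\bigo{\hat{\Delta}^2/(f\ell^2)}$, which remains negligible throughout the process as long as $k = \bigo{\hat{\Delta}/\log f}$. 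Consequently $D(v)/\ell(v)$ still decreases by a constant factor per round, and after $\bigo{\log\log\hat{\Delta}}$ rounds one has $\ell(v) > D(v)$ for every $v$, after which a greedy finish within the palette $[k]$ completes the proper colouring.

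The main obstacle is concentration: $\ell(v)$ and $D(v)$ must stay close to their expectations, uniformly over all vertices and throughout all rounds. This is handled by a now-standard combination of Talagrand's inequality for the single-vertex deviation bounds with the Lov\'asz Local Lemma to glue them across the graph, where the bad events are significant deviations of $\ell(v)$ or $D(v)$ from their conditional means and the dependency graph of these events has bounded degree thanks to $\hat{\Delta}$. A secondary delicate point is verifying that the second-order correction due to sparsity is controlled not just in expectation but also in the conditional analysis at each round, which requires a careful choice of the rate of "activation" and possibly a mild additional truncation step.
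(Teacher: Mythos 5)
You should first note that the paper contains no proof of this statement at all: Theorem~\ref{thm:AKSsparse} is imported as a black box from \cite{AKS99}, with only the remark that its proof invokes Johansson's theorem for triangle-free graphs. The actual Alon--Krivelevich--Sudakov argument is structurally different from yours: roughly, they randomly partition $\hat{V}$ into about $\hat{\Delta}/\sqrt{f}$ classes and use Chernoff-type bounds with the Lov\'asz Local Lemma to ensure that each class induces a subgraph of maximum degree $D\approx\sqrt{f}$ in which every neighbourhood spans only $O(1)$ edges (degrees drop linearly in the number of classes while edges inside neighbourhoods drop quadratically), so each class is essentially triangle-free; then each class is coloured from its own palette of $O(D/\log D)=O(\sqrt{f}/\log f)$ colours by citing Johansson's theorem (in a mild ``few edges per neighbourhood'' form), and summing the disjoint palettes gives $O(\hat{\Delta}/\log f)$. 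Your plan instead reruns the whole semi-random nibble directly on $\hat{G}$, letting the $\hat{\Delta}^2/f$ hypothesis control the correlation terms. That is a genuinely different route, and it is known to be viable --- it is essentially how Vu later proved the stronger list version $\chi_\ell(\hat{G})=O(\hat{\Delta}/\log f)$ --- but it buys the stronger conclusion at the cost of redoing, rather than black-boxing, the hardest part of Johansson's analysis.

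On that cost, your write-up has a genuine gap: everything that makes such proofs work is deferred into ``now-standard combination of Talagrand's inequality with the Local Lemma'' and ``careful choice of the rate of activation.'' Two concrete points. First, the dynamics you describe (every uncoloured vertex samples uniformly from its list each round and keeps the colour unless a neighbour conflicts) is not the process that can be analysed even in the triangle-free case: with $\ell(v)\approx\hat{\Delta}/\log f$ and uncoloured degree $\approx\hat{\Delta}$, the retention probability is about $e^{-D(v)/\ell(v)}=f^{-\Theta(1)}$, so one needs small activation probabilities together with equalising coin flips or a ``wasteful'' colouring step to keep the lists near-uniform and nearly independent; the claimed constant-factor decay of $D(v)/\ell(v)$ is a consequence of that finer bookkeeping, not of the naive process. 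Second, the assertion that the correction $O\bigl(\hat{\Delta}^2/(f\ell^2)\bigr)$ is negligible ``as long as $k=O(\hat{\Delta}/\log f)$'' points the wrong way: negligibility requires a maintained \emph{lower} bound $\ell(v)=\Omega(\hat{\Delta}/\log f)$ (so a palette of size at least $C\hat{\Delta}/\log f$ for large $C$), and propagating that lower bound through all rounds is precisely what the missing concentration lemmas must deliver. So the skeleton is the right one, but as submitted it is an outline rather than a proof; if you want an argument of section length rather than paper length, the AKS partition-plus-Johansson route is the economical choice.
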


Before proving Theorem~\ref{thm:cyclefree}, let us warm up with a proof of Proposition~\ref{prop:cyclefree}.

\begin{proof}[Proof of Proposition~\ref{prop:cyclefree}]
Let $G$ be a graph of maximum degree at most $d$ with no cycle of length $k$. Let $x$ be any vertex of $G$ and consider the subgraph $G[N(x)]$ induced by the neighbourhood of $x$. It clearly has at most $d$ vertices. Since $G$ contains no cycle of length $k$, $G[N(x)]$ contains no path of length $k-2$. Thus, by a result of Erd\H{o}s and Gallai~\cite{ErGa59} on the Tur\'an number of paths, $G[N(x)]$ contains at most $(k-3)d/2$ edges. By applying Theorem~\ref{thm:AKSsparse} with $\hat{\Delta}=d$ and $f = 2d/(k-3)$, it follows that $\chi(G) = \bigo{d/\log d}$ as $d\to\infty$.  There are standard probabilistic examples having arbitrarily large girth that show this bound to be sharp up to a constant factor, cf.~\cite[Ex.~12.7]{MoRe02}.
\end{proof}

\begin{proof}[Proof of Theorem~\ref{thm:cyclefree}]
Alon and Mohar~\cite{AlMo02} showed that $\chi^t_g(d) = \Omega(d^t/\log d)$ as $d\to\infty$, so it suffices to provide the upper bounds.

Let $G$ be a graph of maximum degree at most $d$ satisfying the required forbidden cycle conditions.
Our plan is to apply Theorem~\ref{thm:AKSsparse} with $\hat{G} = G^t$, $\hat{\Delta} = d^t$ and $f=\Omega(d)$, directly obtaining the bound on $\chi(G^t)$ we desire.
For the proof of Theorem~\ref{thm:cyclefree}, it remains to show that the number of edges spanning the neighbourhood of any vertex in $G^t$ is $O(d^{2t-1})$.

Let $x$ be any vertex of $G$. Let us denote by $A_i = A_i(x)$ the set of vertices of $G$ at distance exactly $i$ from $x$. Clearly, we have that $|A_i|\le\sum_{j=1}^i|A_j|\le d^i$ for all $i$.
We want to show that the number of pairs of vertices from $\bigcup_{i=1}^t A_i$ that are within distance $t$ of one another is $O(d^{2t-1})$. Note that the number of such pairs with one vertex in $A_i$, for some $i < t$, is at most $d^{t+i} \le d^{2t-1}$.
In fact, we shall show the stronger assertion that the number of paths of length $t$ both of whose endpoints are within distance $t$ of $x$ is $O(d^{2t-1})$. Note here that the number of such paths of length $i$, for some $i<t$, is at most $d^t(d-1)^i < d^{2t-1}$.

With the assumption on forbidden cycle lengths in $G$, we show the following claims.
\begin{claim}\label{clm:cyclefree1}
The induced subgraph $G[A_{t-1}\cup A_t]$ contains no $6$-path $x_1y_1x_2y_2x_3y_3x_4$ such that $x_i \in A_{t-1}$ for all $i$.
\end{claim}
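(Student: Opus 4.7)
The plan is to assume the existence of such a $6$-path $x_1y_1x_2y_2x_3y_3x_4$ in $G[A_{t-1}\cup A_t]$ and derive a cycle of length in $\{8,10,\ldots,2t+2\}$, contradicting the hypothesis on $G$. For each $i\in\{1,2,3,4\}$ I would fix a shortest path $P_i$ from $x$ to $x_i$ in $G$; each $P_i$ has length $t-1$ and its internal vertices lie in $A_1\cup\cdots\cup A_{t-2}$. The $6$-path itself lives in $A_{t-1}\cup A_t$, so the interior of every $P_i$ is automatically disjoint from all seven vertices of the $6$-path.

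The central step is to analyse the two closed walks $W_{13}:=P_1\cdot x_1y_1x_2y_2x_3\cdot P_3^{-1}$ and $W_{24}:=P_2\cdot x_2y_2x_3y_3x_4\cdot P_4^{-1}$, each of length $2t+2$. Let $z$ be the deepest vertex of $P_1\cap P_3$, at some depth $j$. Since any two shortest paths from $x$ meet only at vertices of matching depth, and since $x_1\neq x_3$, we have $0\le j\le t-2$; and by maximality of $j$ the sub-paths of $P_1,P_3$ from $z$ to $x_1$ and to $x_3$ are internally disjoint. Gluing them to the middle $4$-segment yields a simple cycle of length $2(t-j)+2$, which is forbidden unless $j=t-2$. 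In the boundary case $j=t-2$ the vertex $z$ is the penultimate vertex of both $P_1$ and $P_3$, hence a common neighbour $p\in A_{t-2}$ of $x_1$ and $x_3$. The same analysis applied to $W_{24}$ either already produces a forbidden cycle or yields a common neighbour $q\in A_{t-2}$ of $x_2$ and $x_4$.

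The finishing step, in the case where both analyses fall into their boundary cases and produce $p$ and $q$, is to exhibit an explicit $8$-cycle. If $p=q$ then $p$ is adjacent to all four $x_i$, and $p\,x_1y_1x_2y_2x_3y_3x_4\,p$ is an $8$-cycle. If $p\neq q$ then $p\,x_1y_1x_2\,q\,x_4y_3x_3\,p$ is an $8$-cycle, using only the edges $px_1,px_3,qx_2,qx_4$ from the common-neighbour relations and the remaining edges from the $6$-path. Both cycles are simple because $p,q\in A_{t-2}$ sit at a different depth from every vertex of the $6$-path (which lies in $A_{t-1}\cup A_t$) and the $6$-path is simple by assumption. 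Since $8\in\{8,10,\ldots,2t+2\}$ for every $t\ge 3$, either subcase furnishes the desired contradiction.

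The step I expect to be the main obstacle is precisely the borderline value $j=t-2$: the cycle naturally read off from $W_{13}$ collapses to length $6$, which is not forbidden, so the argument cannot conclude from $W_{13}$ alone. Overcoming this forces me to run the same analysis on $W_{24}$ in parallel and, in the simultaneous bad case, to combine the two common neighbours $p$ and $q$ via the two explicit $8$-cycles above; verifying simplicity of these cycles is then routine thanks to the depth partition.
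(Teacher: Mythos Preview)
Your argument is correct and essentially identical to the paper's: the paper fixes a breadth-first search tree and takes $a$ and $b$ to be the last common ancestors of $(x_1,x_3)$ and $(x_2,x_4)$ respectively, forces $a,b\in A_{t-2}$ by the same cycle-length count, and exhibits the very same two $8$-cycles $a x_1 y_1 x_2 y_2 x_3 y_3 x_4 a$ (if $a=b$) and $a x_1 y_1 x_2 b x_4 y_3 x_3 a$ (if $a\neq b$). One small omission: you explicitly conclude only for $t\ge 3$, but for $t=2$ your analysis of $W_{13}$ with $j=0=t-2$ already yields a $6$-cycle, which \emph{is} forbidden in that case, so no further work is needed there.
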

\begin{claim}\label{clm:cyclefree2}
The induced subgraph $G[A_t\cup A_{t+1}]$ contains no $6$-path $x_1y_1x_2y_2x_3y_3x_4$ such that $x_i \in A_t$ for all $i$.
\end{claim}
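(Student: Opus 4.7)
The plan is to derive a contradiction by exhibiting a forbidden cycle. Suppose such a $6$-path $x_1y_1x_2y_2x_3y_3x_4$ exists in $G[A_t\cup A_{t+1}]$ with $x_i\in A_t$. For each pair $1\le i<j\le 4$ and any BFS tree $T$ rooted at $x$, the two tree-paths from $x$ to $x_i,x_j$ agree up to their LCA $a_T$ at some depth $d_T\ge 0$ and are internally disjoint thereafter. Combined with the sub-path of length $2(j-i)$ of the $6$-path joining $x_i$ to $x_j$, they produce a simple cycle of length $2(t-d_T)+2(j-i)$: the two arms meet level $t$ only at $x_i,x_j$, while every $y_k\in A_t\cup A_{t+1}$ is distinct from all arm vertices (which lie at levels $d_T,d_T+1,\dots,t-1$ and the endpoints).

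For each BFS tree $T$, every such cycle must lie outside the forbidden set $\{8,10,\dots,2t+2\}$. Solving for the admissible LCA depths in each case yields:
\[
d_T(x_i,x_{i+1})\ge t-2,\qquad d_T(x_i,x_{i+2})\in\{0\}\cup\{t-1\},\qquad d_T(x_1,x_4)\le 1.
\]
In particular, for each $i\in\{1,2,3\}$, any choice of shortest paths from $x$ to $x_i$ and to $x_{i+1}$ must share a vertex at depth $\ge t-2$. A Menger-type argument applied to the shortest-path DAG then furnishes, for each $i$, a single vertex $a_i$ at depth in $\{t-2,t-1\}$ lying on every shortest path from $x$ to both $x_i$ and $x_{i+1}$.

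The contradiction comes from a short case analysis on the coincidences and depths of $a_1,a_2,a_3$. If $a_1=a_2=a_3=a$, then $a$ lies on every shortest path to every $x_k$, so in particular $d_T(x_1,x_4)\ge d(x,a)\ge t-2$ in any BFS tree $T$ routed through $a$, contradicting $d_T(x_1,x_4)\le 1$. Otherwise some $a_i\neq a_{i+1}$, so the common vertex of the $6$-path (either $x_2$ or $x_3$) admits two shortest paths to $x$ meeting only at $x$; these paths close into a cycle of length $2t$, which is forbidden for $t\ge 4$. The boundary case $t=3$ needs a separate direct check: one verifies by hand that the $6$-path together with any two shortest paths to distinct $x_i,x_j$ with differing depth-$1$ ancestors of $x$ completes an $8$-cycle (since for $t=3$ only $C_8$ is excluded), reducing once again to the collapsed case handled above.

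The main obstacle is making the Menger-style extraction of the articulation vertices $a_i$ rigorous and then tracking simplicity of all the extracted cycles through the case split, particularly when $a_i$ sits at depth $t-1$ rather than $t-2$. Claim~\ref{clm:cyclefree1} is proved along the same lines, with every occurrence of $t$ replaced by $t-1$ and the level $A_t\cup A_{t+1}$ replaced by $A_{t-1}\cup A_t$; the cycle-length thresholds shift accordingly but the structural argument is identical.
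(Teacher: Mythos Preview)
Your depth constraints on the LCAs in a fixed BFS tree are correct (for $t\ge 3$), but the argument breaks down after that.

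First, the Menger step is not just a technicality you can defer: it does not obviously hold. Knowing that $d_T(x_i,x_{i+1})\ge t-2$ for \emph{every} BFS tree $T$ tells you that any two shortest paths to $x_i$ and $x_{i+1}$ that can be simultaneously embedded in a BFS tree meet at depth $\ge t-2$; it does not directly produce a single vertex $a_i$ lying on \emph{all} shortest paths to both. You acknowledge this, but the rest of the argument depends on it.

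Second, even granting the existence of the $a_i$'s, the case $a_i\ne a_{i+1}$ is mishandled. If $a_1$ and $a_2$ both lie on every shortest path to $x_2$, then either they are at the same depth (impossible, since a shortest path meets each level once) or at different depths, in which case every shortest path to $x_2$ passes through \emph{both} $a_1$ and $a_2$. So $x_2$ does not admit two shortest paths to $x$ meeting only at $x$; your $2t$-cycle does not materialise. Finally, $t=2$ is not addressed at all (the forbidden set is $\{6\}$, not $\{8,\dots,2t+2\}$, so your depth formulas do not apply), and $t=3$ is left as a hand-wave.

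The paper's proof sidesteps all of this by working in one fixed BFS tree and looking at the LCA of a \emph{non-adjacent} pair. The LCA $a$ of $x_1$ and $x_3$ has depth in $\{0,t-1\}$. If $a\in A_{t-1}$ (so $a$ is adjacent to both $x_1$ and $x_3$), and similarly the LCA $b$ of $x_2,x_4$ is in $A_{t-1}$, then one writes down an explicit $8$-cycle through $a,b$ and four of the path vertices, exactly as in Claim~\ref{clm:cyclefree1}. The only new case is $a=x$: then the tree paths to $x_1$ and $x_3$ share only $x$, so the tree path to $x_2$ can agree beyond $x$ with at most one of them, forcing one of $\mathrm{LCA}(x_1,x_2)$, $\mathrm{LCA}(x_2,x_3)$ to equal $x$ as well. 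Two disjoint tree paths of length $t$ together with the two edges $x_1y_1x_2$ (or $x_2y_2x_3$) give a $(2t+2)$-cycle directly. No Menger argument or articulation vertex is needed.
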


\begin{claimproof}{ of Claim~\ref{clm:cyclefree1}}
For $t=2$, $G[A_1\cup A_2]$ contains no $4$-path $x_1y_1x_2y_2x_3$ such that $x_1,x_2,x_3\in A_1$, or else this path together with $x$ forms a cycle of length $6$.

So suppose $t\ge 3$
and $x_1y_1x_2y_2x_3y_3x_4$ is a $6$-path in $G[A_{t-1}\cup A_t]$ such that $x_i \in A_{t-1}$ for all $i$. In a breadth-first search tree rooted at $x$, let $a$ be the last common ancestor of $x_1$ and $x_3$. If $a \in A_i$ for some $i < t-2$, then there is a cycle containing the vertices $a$, $x_1$, $y_1$, $x_2$, $y_2$, $x_3$ of length $2(t-1-i)+4 \in \{8,10,\dots,2t+2\}$. So $a \in A_{t-2}$. Similarly, if $b$ is the last common ancestor of $x_2$ and $x_4$, then $b\in A_{t-2}$. If $a=b$, then $ax_1y_1x_2y_2x_3y_3x_4a$ is a cycle of length $8$. Otherwise, $ax_1y_1x_2bx_4y_3x_3a$  is a cycle of length $8$. In all cases we obtain a contradiction to the forbidden cycle conditions.
\end{claimproof}
\begin{claimproof}{ of Claim~\ref{clm:cyclefree2}}
For $t=2$, suppose that $G[A_2\cup A_3]$ contains a $4$-path $x_1y_1x_2y_2x_3$ such that $x_1,x_2,x_3\in A_2$. In a breadth-first search tree rooted at $x$, let $a$ be the last common ancestor of $x_1$ and $x_3$. It must be that $a=x$ or else $a$ together with the $4$-path forms a cycle of length $6$.
Then we consider the last common ancestors $a'$ of $x_1$ and $x_2$ and $a''$ of $x_2$ and $x_3$.
If neither of these is $x$, then $xa'x_1y_1x_2a''$ is a cycle of length $6$, a contradiction.
Otherwise, if say $a'$ is $x$, then there is a cycle containing $x$, $x_1$, $y_1$, $x_2$ that has length $6$, a contradiction.

So suppose $t\ge 3$
and $x_1y_1x_2y_2x_3y_3x_4$ is a $6$-path in $G[A_t\cup A_{t+1}]$ such that $x_i \in A_t$ for all $i$. We may attempt the same argument as for Claim~\ref{clm:cyclefree1}, except that the last common ancestor $a$ of $x_1$ and $x_3$ may well be in $A_0$, i.e.~$a=x$, which does not contradict the cycle condition. In this case, we consider the last common ancestors $a'$ of $x_1$ and $x_2$ and $a''$ of $x_2$ and $x_3$. 
If neither of these is $x$, then there is a cycle of length $2t+2$ which contains $x$, $a'$, $x_1y_1x_2$ and $a''$, a contradiction.
Otherwise, if say $a'$ is $x$, then there is a cycle containing $x$, $x_1$, $y_1$, $x_2$ that has length $2t+2$, a contradiction. So continuing as in the proof of Claim~\ref{clm:cyclefree1}, we obtain a contradiction in all cases.
\end{claimproof}

We can proceed with counting the (ordered) $t$-paths that join two vertices within distance $t$ of $x$. We use the two claims to estimate the number of $t$-paths containing particular vertices and edges.

Let us call a vertex $u$ of $G$ a {\em bottleneck of type~1} if $u\in A_t$ and $u$ has at least four neighbours in $A_{t-1}$.
We shall show that the vertices of $A_{t-1}$ are adjacent to at most $2$ bottlenecks of type~1 on average. Indeed, let $u'\in A_{t-1}$ be adjacent to $k\ge 3$ such bottlenecks, call them $y_1, y_2,\dots,y_k$. Let $Y = \{y_1,\dots,y_k\}$ and $U = N(Y)\cap A_{t-1} \setminus \{u'\}$. Every vertex of $U$ is adjacent to exactly one bottleneck of type~1. For otherwise suppose $u''\in U$ were adjacent to two bottlenecks of type~1, $y\in Y$ and $z$. Then, using $k\ge 3$ and the definition of a bottleneck of type~1, there would exist $a\in N(z) \cap A_{t-1}\setminus\{u',u''\}$, $y'\in Y\setminus\{y,z\}$ and $b\in N(y')\cap A_{t-1}\setminus\{a,u',u''\}$. And so $azu''yu'y'b$ would be a $6$-path in $G[A_{t-1}\cup A_t]$, a contradiction to Claim~\ref{clm:cyclefree1}. Since any bottleneck of type~1 has at least four neighbours in $A_{t-1}$, it follows that $|U|\ge 3k$. We then compute that the average adjacency to bottlenecks of type~1 from $\{u'\} \cup U$ is $(k+|U|)/(1+|U|) \le 1+(k-1)/(3k+1)<2$. Since this accounts for all vertices in $A_{t-1}$ adjacent to at least three bottlenecks of type~1, we conclude that the overall average adjacency from $A_{t-1}$ is also at most $2$, i.e.~the number of bottlenecks of type~1 is at most $2|A_{t-1}|<2d^{t-1}$. 
The number of $t$-paths containing a given vertex is at most $(t+1)d(d-1)^{t-1}$ (where $t+1$ counts its position within the path); therefore, the number of $t$-paths containing a bottleneck of type~1 is at most $2(t+1)d^t(d-1)^{t-1} < 3td^{2t-1}$.

Let us call an edge $uv$ of $G$ a {\em bottleneck of type~2} if $u\in A_t$, $v\in A_{t+1}$ and $v$ has at least four neighbours in $A_t$.
We shall show that the vertices of $A_t$ are incident to at most $2$ bottlenecks of type~2 on average. Indeed, let $u'\in A_t$ be incident to $k\ge 3$ such bottlenecks, call them $u'y_1, u'y_2,\dots,u'y_k$. Let $Y = \{y_1,\dots,y_k\}$ and $U = N(Y)\cap A_t \setminus \{u'\}$. Every vertex of $U$ is incident to exactly one bottleneck of type~2. For otherwise suppose $u''\in U$ were incident to two bottlenecks of type~2, $u''y$ with $y\in Y$ and $u''z$. Then, using $k\ge 3$ and the definition of a bottleneck of type~2, there would exist $a\in N(z) \cap A_t\setminus\{u',u''\}$, $y'\in Y\setminus\{y,z\}$ and $b\in N(y')\cap A_t\setminus\{a,u',u''\}$. And so $azu''yu'y'b$ would be a $6$-path in $G[A_t\cup A_{t+1}]$, a contradiction to Claim~\ref{clm:cyclefree2}. Since the $A_{t+1}$ endpoint of any bottleneck of type~2 has at least four neighbours in $A_t$, it follows that $|U|\ge 3k$. We then compute that the average incidence to bottlenecks of type~2 from $\{u'\} \cup U$ is $(k+|U|)/(1+|U|) \le 1+(k-1)/(3k+1)<2$. Since this accounts for all vertices in $\in A_t$ incident to at least three bottlenecks of type~2, we conclude that the overall average incidence from $A_t$ is also at most two, i.e.~the number of bottlenecks of type~2 is at most $2|A_t|<2d^t$. 
The number of $t$-paths containing a given edge is at most $t(d-1)^{t-1}$; therefore, the number of $t$-paths containing a bottleneck of type~2 is at most $2d^tt(d-1)^{t-1} < 2td^{2t-1}$.

Let us call an edge $uv$ of $G$ a {\em bottleneck of type~3} if $u,v\in A_t$.
By Claim~\ref{clm:cyclefree2}, $G[A_t]$ contains no $6$-path, and so by the result of Erd\H{o}s and Gallai~\cite{ErGa59} it has at most $2.5|A_t| < 2.5d^t$ edges. We can thus conclude that the number of $t$-paths containing a bottleneck of type~3 is at most $2.5d^tt(d-1)^{t-1}<3td^{2t-1}$.

Now we concentrate on $t$-paths having both endpoints in $A_t$ and containing no bottleneck of any type.
Let $x_0x_1\cdots x_t$ be such a $t$-path. So $x_0$ and $x_t$ are in $A_t$ and are not bottlenecks of type~1, and no $x_ix_{i+1}$ is a bottleneck of type~2 or~3. Since $x_0x_1$ is not a bottleneck of type~3, there are two possibilities: $x_1\in A_{t-1}$ or there is some $i\in\{1,\dots,t-1\}$ such that $x_i\in A_{t+1}$ and $x_{i+1}\in A_t$.
In the first case, since $x_0$ is not a bottleneck of type~1, there are at most three choices for $x_1$ and at most $(d-1)^{t-1}$ choices for the remainder of the path. In the second case, there are $t-1$ choices for $i$, there are at most three choices for $x_{i+1}$ given $x_i$ since $x_ix_{i+1}$ is not a bottleneck of type~2, and there are at most $(d-1)^{t-1}$ choices for the rest of the path.
Together with the at most $d^t$ choices for $x_0$, we combine the case considerations to conclude that the total number of choices for the path $x_0x_1\cdots x_t$ is at most $d^t(3+3(t-1))(d-1)^{t-1}<3td^{2t-1}$.

Overall, the number of pairs of vertices from $\bigcup_{i=1}^t A_i$ that are within distance $t$ of one another is at most $(2+11t)d^{2t-1}$. As $x$ was arbitrary, this implies that the number of edges spanning the neighbourhood of any vertex in $G^t$ is at most $(2+11t)d^{2t-1}$. An application of Theorem~\ref{thm:AKSsparse} to $G^t$ with $f = d/(2+11t)$ completes the proof.
\end{proof}

\section{Circular constructions}\label{sec:circular}

In this section, we describe some constructions based on a natural ``circular unfolding'' of the Hamming graph, or of the De Bruijn graph.
We first give basic versions that have weaker girth properties but provide intuition, and we develop these further later.

\begin{proposition}\label{prop:circularsimple}
For all positive $t$ and all even $d$, there is a graph $G$ of maximum degree $d$ such that $\chi(G^t) \ge d^t/2^t$. Moreover, $G$ can be chosen to have girth $4$ if $t\notin \{1,3\}$ as well as bipartite if $t$ is even.
\end{proposition}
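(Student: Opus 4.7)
The plan is to construct a ``circular unfolding'' $G = G(d,t,L)$ of the De Bruijn graph, whose additional parameter $L$ will be chosen as a function of $t$, and to pick out an $n^t$-clique inside $G^t$. Setting $n = d/2$, we let $V(G) = \mathbb{Z}_n^t \times \mathbb{Z}_L$, and, for each $\ell \in \mathbb{Z}_L$, each $(a_1,\dots,a_t) \in \mathbb{Z}_n^t$, and each $b \in \mathbb{Z}_n$, put an edge between $((a_1,\dots,a_t),\ell)$ and $((a_2,\dots,a_t,b),\ell+1)$, deleting any loops that arise. Each vertex has at most $n$ ``forward'' and $n$ ``backward'' neighbours, so $\Delta(G) \le 2n = d$.

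The central claim is that when $L \mid t$ the $n^t$ level-$0$ vertices are pairwise within distance $t$ in $G$: starting at $((a_1,\dots,a_t),0)$ and taking $t$ consecutive forward steps that insert the symbols $b_1,\dots,b_t$ in turn lands on $((b_1,\dots,b_t), t \bmod L) = ((b_1,\dots,b_t),0)$. This exhibits $K_{n^t}$ inside $G^t$, giving $\chi(G^t) \ge d^t/2^t$. For girth, any closed walk of length $k$ in $G$ accumulates a net level-change in $\{-k,-k+2,\dots,k\}$ which must be $\equiv 0 \pmod L$. When $L = 2$ every edge flips the parity of $\ell$, so $G$ is bipartite; when $L \ge 4$, no value in $\{\pm 1, \pm 3\}$ is divisible by $L$, so $G$ is triangle-free. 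In both regimes an explicit ``forward, forward, backward, backward'' walk (possible whenever $n \ge 2$) closes into a $4$-cycle, so the girth is exactly $4$.

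It remains to pick $L$ by cases: for $t$ even take $L = 2$ (bipartite, girth $4$, and $L \mid t$); for $t$ odd with $t \ge 5$ take $L = t$ (girth $4$, and $L \mid t$); for $t = 3$ take $L = 3$ (the proposition imposes no girth requirement here, and $L \mid 3$); and for $t = 1$ take $G = K_{d/2+1}$ directly. The only part requiring a little care is verifying that $t$ forward shifts from a level-$0$ start really do cover every level-$0$ word (the classical De Bruijn diameter observation) and that the cited $FFBB$ walk produces four pairwise distinct vertices, both of which hold routinely once $n \ge 2$.
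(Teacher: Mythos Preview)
Your argument is essentially the paper's De Bruijn-type construction $G_1$: the paper always takes $L=t$ parts, while you allow $L$ to vary (taking $L=2$ for even $t$, $L=t$ for odd $t\ge 5$), but this is only a cosmetic difference and the clique-in-$G^t$ and girth analyses are the same.

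There is one genuine slip, though it is easily repaired. The ``forward, forward, backward, backward'' walk you cite does \emph{not} close into a $4$-cycle. From $((a_1,\dots,a_t),0)$ two forward steps reach $((a_3,\dots,a_t,b_1,b_2),2)$; a backward step goes to $((c,a_3,\dots,a_t,b_1),1)$ and a second backward step to $((c',c,a_3,\dots,a_t),0)$. Closure forces $c'=a_1$ and $c=a_2$, but then the third vertex $((a_2,a_3,\dots,a_t,b_1),1)$ coincides with the second, so the walk degenerates. The $4$-cycle you want is ``forward, backward, forward, backward'':
\[
((a_1,\dots,a_t),0),\quad ((a_2,\dots,a_t,b),1),\quad ((a_1',a_2,\dots,a_t),0),\quad ((a_2,\dots,a_t,b'),1),
\]
with $a_1'\ne a_1$ and $b'\ne b$ (possible once $n\ge 2$). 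With this correction the girth-$4$ claim, and hence the whole proof, goes through.
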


\begin{proof}
Of course, the $t$-dimensional De Bruijn graph on $d/2$ symbols already certifies the first part of the statement, but we give two other constructions that satisfy the second part of the statement. Thus hereafter we can assume $t\ge 2$.
For both constructions, the vertex set is $V = \cup_{i=0}^{t-1}U^{(i)}$ where each $U^{(i)}$ is a copy of $[d/2]^t$, the set of ordered $t$-tuples of symbols from $[d/2]=\{1,\dots,d/2\}$.

\begin{description}
\item[A De Bruijn-type construction.]
We define $G_1 = (V,E_1)$ as follows.
For all $i\in\{0,\dots,t-1\}$, we join an element $(x^{(i)}_0,\dots,x^{(i)}_{t-1})$ of $U^{(i)}$ and an element $(x^{(i+1 \bmod t)}_0,\dots,x^{(i+1 \bmod t)}_{t-1})$ of $U^{(i+1 \bmod t)}$ by an edge if the latter is a left cyclic shift of the former,
i.e.~if $x^{(i+1 \bmod t)}_j = x^{(i)}_{j+1}$ for all $j\in\{0,\dots,t-2\}$ (and $x^{(i)}_0$, $x^{(i+1 \bmod t)}_{t-1}$ are arbitrary from $[d/2]$).
\item[A Hamming-type construction.]
We define $G_2 = (V,E_2)$ as follows.
For all $i\in\{0,\dots,t-1\}$, we join an element $(x^{(i)}_0,\dots,x^{(i)}_{t-1})$ of $U^{(i)}$ and an element $(x^{(i+1 \bmod t)}_0,\dots,x^{(i+1 \bmod t)}_{t-1})$ of $U^{(i+1 \bmod t)}$ by an edge if the $t$-tuples agree on all symbols except possibly at coordinate $i$,
i.e.~if $x^{(i+1 \bmod t)}_j = x^{(i)}_j$ for all $j\in\{0,\dots,t-1\}\setminus\{i\}$ (and $x^{(i)}_i$, $x^{(i+1 \bmod t)}_i$ are arbitrary from $[d/2]$).
\end{description}
See Figure~\ref{fig:circularsimple} for a schematic of $G_2$.

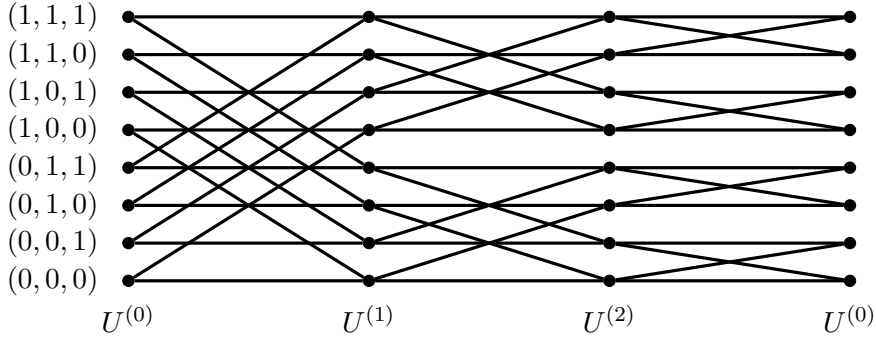
\begin{figure}
\centering
%!TEX root = distgirth.tex

\begin{tikzpicture}[-,>=,node distance=0.5cm,scale=1,draw,nodes={circle,draw,fill=black, inner sep=1.5pt}]

\node (a0) [left] {};
\node (a1) [above of=a0] {};
\node (a2) [above of=a1] {};
\node (a3) [above of=a2] {};
\node (a4) [above of=a3] {};
\node (a5) [above of=a4] {};
\node (a6) [above of=a5] {};
\node (a7) [above of=a6] {};

\node (z0) [left=0.2 of a0,fill=none,draw=none] {$(0,0,0)$};
\node (z1) [left=0.2 of a1,fill=none,draw=none] {$(0,0,1)$};
\node (z2) [left=0.2 of a2,fill=none,draw=none] {$(0,1,0)$};
\node (z3) [left=0.2 of a3,fill=none,draw=none] {$(0,1,1)$};
\node (z4) [left=0.2 of a4,fill=none,draw=none] {$(1,0,0)$};
\node (z5) [left=0.2 of a5,fill=none,draw=none] {$(1,0,1)$};
\node (z6) [left=0.2 of a6,fill=none,draw=none] {$(1,1,0)$};
\node (z7) [left=0.2 of a7,fill=none,draw=none] {$(1,1,1)$};

\node (b0) [right=3 of a0] {};
\node (b1) [above of=b0] {};
\node (b2) [above of=b1] {};
\node (b3) [above of=b2] {};
\node (b4) [above of=b3] {};
\node (b5) [above of=b4] {};
\node (b6) [above of=b5] {};
\node (b7) [above of=b6] {};

\node (c0) [right=3 of b0] {};
\node (c1) [above of=c0] {};
\node (c2) [above of=c1] {};
\node (c3) [above of=c2] {};
\node (c4) [above of=c3] {};
\node (c5) [above of=c4] {};
\node (c6) [above of=c5] {};
\node (c7) [above of=c6] {};

\node (d0) [right=3 of c0] {};
\node (d1) [above of=d0] {};
\node (d2) [above of=d1] {};
\node (d3) [above of=d2] {};
\node (d4) [above of=d3] {};
\node (d5) [above of=d4] {};
\node (d6) [above of=d5] {};
\node (d7) [above of=d6] {};

  \path (a0) edge [very thick] node[fill=none,draw=none] {} (b0);
  \path (a1) edge [very thick] node[fill=none,draw=none] {} (b1);
  \path (a2) edge [very thick] node[fill=none,draw=none] {} (b2);
  \path (a3) edge [very thick] node[fill=none,draw=none] {} (b3);
  \path (a4) edge [very thick] node[fill=none,draw=none] {} (b4);
  \path (a5) edge [very thick] node[fill=none,draw=none] {} (b5);
  \path (a6) edge [very thick] node[fill=none,draw=none] {} (b6);
  \path (a7) edge [very thick] node[fill=none,draw=none] {} (b7);
  \path (a0) edge [very thick] node[fill=none,draw=none] {} (b4);
  \path (a1) edge [very thick] node[fill=none,draw=none] {} (b5);
  \path (a2) edge [very thick] node[fill=none,draw=none] {} (b6);
  \path (a3) edge [very thick] node[fill=none,draw=none] {} (b7);
  \path (a4) edge [very thick] node[fill=none,draw=none] {} (b0);
  \path (a5) edge [very thick] node[fill=none,draw=none] {} (b1);
  \path (a6) edge [very thick] node[fill=none,draw=none] {} (b2);
  \path (a7) edge [very thick] node[fill=none,draw=none] {} (b3);

  \path (b0) edge [very thick] node[fill=none,draw=none] {} (c0);
  \path (b1) edge [very thick] node[fill=none,draw=none] {} (c1);
  \path (b2) edge [very thick] node[fill=none,draw=none] {} (c2);
  \path (b3) edge [very thick] node[fill=none,draw=none] {} (c3);
  \path (b4) edge [very thick] node[fill=none,draw=none] {} (c4);
  \path (b5) edge [very thick] node[fill=none,draw=none] {} (c5);
  \path (b6) edge [very thick] node[fill=none,draw=none] {} (c6);
  \path (b7) edge [very thick] node[fill=none,draw=none] {} (c7);
  \path (b0) edge [very thick] node[fill=none,draw=none] {} (c2);
  \path (b1) edge [very thick] node[fill=none,draw=none] {} (c3);
  \path (b2) edge [very thick] node[fill=none,draw=none] {} (c0);
  \path (b3) edge [very thick] node[fill=none,draw=none] {} (c1);
  \path (b4) edge [very thick] node[fill=none,draw=none] {} (c6);
  \path (b5) edge [very thick] node[fill=none,draw=none] {} (c7);
  \path (b6) edge [very thick] node[fill=none,draw=none] {} (c4);
  \path (b7) edge [very thick] node[fill=none,draw=none] {} (c5);

  \path (c0) edge [very thick] node[fill=none,draw=none] {} (d0);
  \path (c1) edge [very thick] node[fill=none,draw=none] {} (d1);
  \path (c2) edge [very thick] node[fill=none,draw=none] {} (d2);
  \path (c3) edge [very thick] node[fill=none,draw=none] {} (d3);
  \path (c4) edge [very thick] node[fill=none,draw=none] {} (d4);
  \path (c5) edge [very thick] node[fill=none,draw=none] {} (d5);
  \path (c6) edge [very thick] node[fill=none,draw=none] {} (d6);
  \path (c7) edge [very thick] node[fill=none,draw=none] {} (d7);
  \path (c0) edge [very thick] node[fill=none,draw=none] {} (d1);
  \path (c1) edge [very thick] node[fill=none,draw=none] {} (d0);
  \path (c2) edge [very thick] node[fill=none,draw=none] {} (d3);
  \path (c3) edge [very thick] node[fill=none,draw=none] {} (d2);
  \path (c4) edge [very thick] node[fill=none,draw=none] {} (d5);
  \path (c5) edge [very thick] node[fill=none,draw=none] {} (d4);
  \path (c6) edge [very thick] node[fill=none,draw=none] {} (d7);
  \path (c7) edge [very thick] node[fill=none,draw=none] {} (d6);

\node (a) [below of=a0,fill=none,draw=none] {$U^{(0)}$};
\node (b) [below of=b0,fill=none,draw=none] {$U^{(1)}$};
\node (c) [below of=c0,fill=none,draw=none] {$U^{(2)}$};
\node (d) [below of=d0,fill=none,draw=none] {$U^{(0)}$};

\end{tikzpicture}
\caption{A schematic for the Hamming-type circular construction $G_2$ for $t=3$ and $d=4$.\label{fig:circularsimple}}
\end{figure}

In both constructions, the maximum degree into $U^{(i+1 \bmod t)}$ from a vertex in $U^{(i)}$ is $d/2$ and the same is true from $U^{(i+1 \bmod t)}$ into $U^{(i)}$, so both constructions have maximum degree $d$ overall.
In both constructions, for any pair of elements in $U^{(0)}$ there is a path between them of length at most $t$, one that passes through every $U^{(i)}$, either by a sequence of $t$ cyclic shifts or a sequence of $t$ one-symbol changes. Therefore, the induced subgraphs ${G_1}^t[U^{(0)}]$ and ${G_2}^t[U^{(0)}]$ are both cliques, implying that $\chi({G_1}^t) \ge |U^{(0)}| = d^t/2^t$ and similarly for $G_2$.
As these constructions are composed of bipartite graphs connected in sequence around a cycle of length $t$, $G_1$ and $G_2$ are of girth $4$ if $t\ne 3$ and are bipartite if $t$ is even. This ends the proof.
\end{proof}

To proceed further with this circular construction to obtain one with higher girth, we certainly have to handle the (many) cycles of length $4$ that span only two consecutive parts $U^{(i)}$ and $U^{(i+1 \bmod t)}$. We do this essentially by substituting a subsegment $U^{(i)}, U^{(i+1 \bmod t)}, \dots, U^{(i+k \bmod t)}$ with a sparse bipartite structure having good distance properties.
Some of the most efficient such sparse structures arise from finite geometries, generalised polygons in particular. We base our substitution operation on these structures, and therefore find it convenient to encapsulate the properties most relevant to us in the following definition.

We say a balanced bipartite graph $H = (V=A\cup B, E)$ with parts $A$ and $B$, $|A|=|B|$, is a {\em good conduit (between $A$ and $B$) with parameters $(\tau, \Delta, \gamma,c)$} if it has girth $\gamma$, it is regular of degree $\Delta$, there is a path of length at most $\tau$ between any $a\in A$ and any $b\in B$, and moreover $|A|$ (and so also $|B|$) is of maximum possible order $\Theta(\Delta^\tau)$ such that $|A| \ge c\Delta^\tau$.

The following good conduits are useful in our constructions, because of their relatively high girth.
The balanced complete bipartite graph $K_{\Delta,\Delta}$ is a good conduit with parameters $(1, \Delta, 4,1)$.
Let $q$ be a prime power. 
The point-line incidence graph ${\cal Q}_q$ of a symplectic quadrangle with parameters $(q,q)$ is a good conduit with parameters $(3,q+1,8,1)$.
The point-line incidence graph  ${\cal H}_q$ of a split Cayley hexagon with parameters $(q,q)$ is a good conduit with parameters $(5,q+1,12,1)$.
We have intentionally made specific classical choices of generalised polygons here, cf.~\cite{PaTh84}, partly because we know they are defined for all prime powers $q$ and partly for symmetry considerations described later.
We remark that no generalised octagon with parameters $(q,q)$ exists and no generalised $n$-gons for any other even value of $n$ exist~\cite{FeHi64}.

We are now prepared to present the main construction of the section.
This is a generalisation of $G_2$.
(It is possible to generalise $G_1$ in a similar way.)

\begin{theorem}\label{thm:circularmain}
Let $t = \sum_{i=0}^{\lambda-1}\tau_i$ for some positive odd integers $\tau_i$ and $\lambda\ge2$. Let $d$ be even.
Suppose that for every $i$ there is a good conduit $H_i$ with parameters $(\tau_i,d/2,\gamma_i,c_i)$.
Then there is a graph $G$ of maximum degree $d$ such that $\chi(G^t) \ge (\prod_{i=0}^{\lambda-1}c_i) d^t/2^t$ and its girth satisfies
$\girth(G) \ge \min\{\lambda,8,\min_i \gamma_i\}$.
Moreover, $G$ is bipartite if and only if $t$ is even.
\end{theorem}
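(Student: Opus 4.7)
The plan is to generalise the Hamming-type construction $G_2$ of Proposition~\ref{prop:circularsimple} by ``substituting'' for each copy of $K_{d/2,d/2}$ the corresponding conduit $H_i$, arranged around a cycle of $\lambda$ parts.

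Concretely, I would set $a_j := |A_j| = |B_j|$ and fix bijections $\psi_j : A_j \to B_j$ for each $j$; for the bipartiteness argument below I will take $\psi_j$ to be a perfect matching of $H_j$, which exists by K\"onig's theorem since $H_j$ is regular bipartite. Writing $Y_j := A_j$, set $V(G) = \mathbb{Z}_\lambda \times \prod_{j=0}^{\lambda-1} Y_j$, $U^{(i)} = \{i\} \times \prod_j Y_j$, and declare $(i, y_0, \ldots, y_{\lambda-1})$ adjacent to $(i+1 \bmod \lambda, y'_0, \ldots, y'_{\lambda-1})$ exactly when $y_j = y'_j$ for $j \ne i$ and $y_i \sim \psi_i(y'_i)$ in $H_i$. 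Then $\Delta(G) \le d$ is immediate from the $(d/2)$-regularity of the conduits, and $|U^{(0)}| = \prod_j a_j \ge \prod_j c_j (d/2)^{\tau_j} = (\prod_j c_j) d^t/2^t$. For the clique-in-$G^t$ property of $U^{(0)}$, I would argue: given any two tuples $\vec y, \vec y'$, construct a $t$-walk in $G$ from $(0, \vec y)$ to $(0, \vec y')$ by traversing the $\lambda$-cycle once, using $\tau_j$ consecutive $H_j$-edges to transport coordinate $j$ from $y_j$ to $y'_j$. Such a $\tau_j$-walk in $H_j$ from the $A_j$-copy of $y_j$ to the $B_j$-copy of $y'_j$ exists because the $A$-to-$B$ diameter of $H_j$ is at most $\tau_j$, and both that distance and $\tau_j$ are odd, so the shortest path extends to length exactly $\tau_j$ by backtracking along an edge.

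The main work lies in the girth analysis. Given any cycle $C$ of length $k$ in $G$, I would read its sequence of parts as a closed walk on $\mathbb{Z}_\lambda$ and split into three cases. (I) If this walk uses only one $\mathbb{Z}_\lambda$-edge $(j, j+1)$, then all coordinates except $y_j$ are constant along $C$, and $C$ projects isomorphically onto a cycle of $H_j$; hence $k \ge \gamma_j$. (II) If the walk has nonzero net displacement on $\mathbb{Z}_\lambda$, it wraps the entire $\lambda$-cycle, forcing $k \ge \lambda$. (III) Otherwise the walk oscillates and touches at least two conduits; here I would enumerate the (few) shapes of zero-displacement closed walks on $\mathbb{Z}_\lambda$ of length $k \in \{4, 6\}$ (up to reflection, essentially the three Dyck-type paths of length six from $0$ to $0$ together with their variants that dip to position $-1$), and verify each. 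In every such shape I expect at least one conduit $H_j$ is used exactly twice; the corresponding sub-walk in $H_j$ is then forced to traverse one edge twice, and the product structure pins down enough of the other coordinates that two vertices of $C$ must coincide, contradicting $C$ being a cycle. This would yield $k \ge 8$ in case (III), and combining all three cases gives $\girth(G) \ge \min\{\lambda, 8, \min_j \gamma_j\}$.

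Finally, for bipartiteness, observe that $t \equiv \lambda \pmod 2$ since each $\tau_j$ is odd. If $\lambda$ is even, the parity of the part index furnishes a bipartition of $G$. If $\lambda$ is odd, the choice of $\psi_j$ as a perfect matching of $H_j$ yields an explicit $\lambda$-cycle: for any $\vec y$, the walk $(0, \vec y) - (1, \vec y) - \cdots - (\lambda-1, \vec y) - (0, \vec y)$ closes up since the matching edge of $H_j$ is used at each transition. The hardest step is the case (III) enumeration in the girth analysis; the underlying mechanism is that a conduit used only twice along a short walk essentially retraces a single edge, and this retracement, combined with the product structure freezing the other coordinates during the retracement, forces vertex coincidences in $C$.
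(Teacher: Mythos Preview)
Your proposal is correct and follows essentially the same route as the paper: the construction (a Hamming-type circular product with the conduit $H_i$ governing coordinate~$i$ between parts $U^{(i)}$ and $U^{(i+1)}$), the maximum-degree and clique-in-$G^t$ arguments, and the three-case girth analysis (single conduit, nonzero winding, winding-zero enumeration of shapes of length $4$ and $6$) all match the paper's proof. One small addition on your side is the choice of $\psi_j$ as a perfect matching of $H_j$; this gives a clean explicit $\lambda$-cycle and hence a tidy proof that $G$ is non-bipartite when $\lambda$ is odd, whereas the paper asserts that direction with a ``clearly''.

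One caution on your case~(III) heuristic: the observation that some conduit is used exactly twice, and therefore retraces a single $H_j$-edge, is true for every winding-zero shape of length $4$ or $6$, but by itself it does not force a vertex coincidence in $C$; you still need the remaining coordinates to agree at the two candidate positions, and that depends on the specific shape. Since you explicitly commit to enumerating the shapes and verifying each, this is not a gap, but be aware that the ``retracement'' mechanism is a guide to finding the coincident pair rather than a self-contained argument. In practice, up to cyclic rotation and reflection there is one shape of length~$4$ and two of length~$6$ touching at least two conduits, and in each the coincidence is immediate from tracking which coordinates are frozen on each arc --- exactly as the paper does.
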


After the proof, we show how to modify the construction in certain cases to mimic the inclusion of good conduits with $\tau$ parameter equal to $2$ (note that good conduits with even $\tau$ are precluded from the definition), to increase the girth of $G$, or to improve the bound on $\chi(G^t)$.

\begin{proof}[Proof of Theorem~\ref{thm:circularmain}]
For every $i$, let $H_i = (V_i=A_i\cup B_i,E_i)$ be the assumed good conduit with parameters $(\tau_i,d/2,\gamma_i,c_i)$.
Write $A_i = \{a^{i}_1,\dots,a^{i}_{n_i}\}$ and $B_i = \{b^{i}_1,\dots,b^{i}_{n_i}\}$.
By the definition of $H_i$, $n_i \ge c_i d^{\tau_i}/2^{\tau_i}$.

We define $G = (V,E)$ as follows.
The vertex set is $V = \cup_{i=0}^{\lambda-1}U^{(i)}$ where each $U^{(i)}$ is a copy of $\prod_{j=0}^{\lambda-1}[n_j]$, the set of ordered $\lambda$-tuples whose $j$\textsuperscript{th} coordinate is a symbol from $[n_j]=\{1,\dots,n_j\}$.
For all $i\in\{0,\dots,\lambda-1\}$, we join an element $(x^{(i)}_0,\dots,x^{(i)}_{\lambda-1})$ of $U^{(i)}$ and an element $(x^{(i+1 \bmod \lambda)}_0,\dots,x^{(i+1 \bmod \lambda)}_{\lambda-1})$ of $U^{(i+1 \bmod \lambda)}$ by an edge only if the $\lambda$-tuples agree on all symbols except possibly at coordinate $i$, in which case we use $H_{i}$ and its ordering as a template for adjacency.
More precisely, join $(x^{(i)}_0,\dots,x^{(i)}_{\lambda-1})$ and $(x^{(i+1 \bmod \lambda)}_0,\dots,x^{(i+1 \bmod \lambda)}_{\lambda-1})$ by an edge of $G$ if $x^{(i)}_j = x^{(i+1 \bmod \lambda)}_j$ for all $j\in\{0,\dots,\lambda-1\}\setminus\{i\}$ and there is an edge in $H_{i}$ joining $a^{i}_{x^{(i)}_{i}}$ and $b^{i}_{x^{(i+1 \bmod \lambda)}_{i}}$.
Clearly $G$ is bipartite if and only if $\lambda$ is even, which holds if and only if $t$ is even.

Since $H_i$ has maximum degree $d/2$, the degree into $U^{(i+1 \bmod \lambda)}$ from a vertex in $U^{(i)}$ is at most $d/2$ (with respect to the edges added between $U^{(i)}$ and $U^{(i+1 \bmod \lambda)}$) and the same is true from $U^{(i+1 \bmod \lambda)}$ into $U^{(i)}$, so overall $G$ has maximum degree $d$.
Between any pair of elements in $U^{(0)}$, there is a path of length at most $t=\sum_{i=0}^{\lambda-1}\tau_i$ passing through every $U^{(i)}$, which changes the symbol at coordinate $i$ via a path of length at most $\tau_i$ in the subgraph induced by $U^{(i)}$ and $U^{(i+1 \bmod \lambda)}$.
It follows that the induced subgraph $G^t[U^{(0)}]$ is a clique, and so $\chi(G^t) \ge |U^{(0)}| = \prod_{i=0}^{\lambda-1}n_i \ge \prod_{i=0}^{\lambda-1}c_id^{\tau_i}/2^{\tau_i} = (\prod_{i=0}^{\lambda-1}c_i)d^t/2^t$.

All that remains is to establish the girth of $G$.
For the statement we essentially only need to consider cycles of length $7$ or less,
whose winding number with respect to the cycle $U^{(0)}U^{(1)}\cdots U^{(\lambda)}U^{(0)}$ is $0$.
Such cycles are of even length, so we only need to consider lengths $4$ and $6$.
We do not need to consider the cycles that only go back and forth between $U^{(i)}$ and $U^{(i+1 \bmod \lambda)}$ (only along the edges added between $U^{(i)}$ and $U^{(i+1 \bmod \lambda)}$), for such cycles are accounted for by the $\min_i \gamma_i$ term.
So, for cycles of length $4$ of winding number $0$, without loss of generality we need only consider one that proceeds in order through $U^{(0)}$, $U^{(1)}$, $U^{(2)}$, and then back through $U^{(1)}$ to $U^{(0)}$, written as $u^{(0)}u^{(1)}u^{(2)}v^{(1)}u^{(0)}$. By construction, the $\lambda$-tuples $u^{(0)}$, $u^{(1)}$, $v^{(1)}$ share all but their zeroth coordinate symbols and the tuples $u^{(2)}$, $u^{(1)}$, $v^{(1)}$ share all but their first coordinate; however, this implies that $u^{(1)}$ and $v^{(1)}$ are the same tuple in $U^{(1)}$, a contradiction.
For cycles of length $6$ that, say, proceed in order through $U^{(0)}$, $U^{(1)}$, $U^{(2)}$, $U^{(3)}$ and back, we argue in a similar fashion as for length $4$ to obtain a contradiction.
The remaining case (for winding number $0$) is a cycle of length $6$ that is, without loss of generality, of the form $u^{(0)}u^{(1)}u^{(2)}v^{(1)}v^{(2)}w^{(1)}u^{(0)}$. By construction, the tuples $u^{(0)}$, $u^{(1)}$, $w^{(1)}$ share all but their zeroth coordinate symbols and the tuples $v^{(2)}$, $v^{(1)}$, $w^{(1)}$ share all but their first coordinate as do $u^{(2)}$, $u^{(1)}$, $v^{(1)}$; however, this implies that $u^{(1)}$ and $w^{(1)}$ are the same tuple in $U^{(1)}$, a contradiction.
This concludes our determination of the girth of $G$.

We remark that cycles of length $8$ may well occur, for instance when the same good conduit $H$ is used two times consecutively.
In particular, supposing $H$ is used from $U^{(0)}$ to $U^{(1)}$ to $U^{(2)}$ and $a_1b_2a_3$ and $b_4a_5b_6$ are two $2$-paths in $H$, then
$(4,1,\ldots)^{(1)}$, $(4,2,\ldots)^{(2)}$, $(4,3,\ldots)^{(1)}$, $(5,3,\ldots)^{(0)}$, $(6,3,\ldots)^{(1)}$, $(6,2,\ldots)^{(2)}$, $(6,1,\ldots)^{(1)}$, $(5,1,\ldots)^{(0)}$, $(4,1,\ldots)^{(1)}$ represents an $8$-cycle in the construction.
\end{proof}

In two of the small values for $t$ (namely, $4$ or $7$), we cannot apply the construction of Theorem~\ref{thm:circularmain} without a modification. The intuition is to include another sparse structure with good distance properties, that is, the point-line incidence graph ${\cal P}_q$ of the projective plane $PG(2,q)$, for $q$ a prime power.
This is a bipartite graph $(V=A\cup B, E)$ of girth $6$, that is regular of degree $q+1$, has a $2$-path between $a$ and $a'$ for any $a,a'\in A$ (and similarly a $2$-path between $b$ and $b'$ for any $b,b'\in B$), and has $|A| = |B| = q^2+q+1$.
The graph ${\cal P}_{d-1}$ certifies $\chi^2_6(d) \gtrsim d^2$ if $d-1$ is a prime power; moreover, since the gap between two successive primes $p$ and $p'$ is $o(p)$~\cite{Ing37}, the inequality holds for all $d$ as $d\to\infty$.

The graph ${\cal P}_q$ has properties similar to what we might require for a good conduit having parameter $\tau=2$, except that it connects vertices in the same part.
One solution to this parity issue is to ``unfold a mirror of ${\cal P}_q$'', that is, add a disjoint copy of one of its parts with the same adjacencies as the original, so that the conduit is between the vertices of two copies of the same part.
See Figure~\ref{fig:mirror} for an illustration of ${\cal P}_2$ together with its mirror, denoted by $-{{\cal P}_2}$.

\begin{figure}
\centering
%!TEX root = distgirth.tex

\begin{tikzpicture}[-,>=,node distance=0.5cm,scale=1,draw,nodes={circle,draw,fill=black, inner sep=1.5pt}]

\node (a1) [] {};
\node (a2) [above of=a1] {};
\node (a3) [above of=a2] {};
\node (a4) [above of=a3] {};
\node (a5) [above of=a4] {};
\node (a6) [above of=a5] {};
\node (a7) [above of=a6] {};

\node (b1) [right=3 of a2] {};
\node (b2) [above of=b1] {};
\node (b3) [above of=b2] {};
\node (b4) [above of=b3] {};
\node (b5) [above of=b4] {};
\node (b6) [above of=b5] {};
\node (b7) [above of=b6] {};

\node (c2) [right=3 of b1] {};
\node (c1) [below of=c2] {};
\node (c3) [above of=c2] {};
\node (c4) [above of=c3] {};
\node (c5) [above of=c4] {};
\node (c6) [above of=c5] {};
\node (c7) [above of=c6] {};

  \path (a1) edge [very thick] node[fill=none,draw=none] {} (b2);
  \path (a1) edge [very thick] node[fill=none,draw=none] {} (b3);
  \path (a1) edge [very thick] node[fill=none,draw=none] {} (b4);
  \path (a2) edge [very thick] node[fill=none,draw=none] {} (b1);
  \path (a2) edge [very thick] node[fill=none,draw=none] {} (b2);
  \path (a2) edge [very thick] node[fill=none,draw=none] {} (b5);
  \path (a3) edge [very thick] node[fill=none,draw=none] {} (b1);
  \path (a3) edge [very thick] node[fill=none,draw=none] {} (b3);
  \path (a3) edge [very thick] node[fill=none,draw=none] {} (b6);
  \path (a4) edge [very thick] node[fill=none,draw=none] {} (b1);
  \path (a4) edge [very thick] node[fill=none,draw=none] {} (b4);
  \path (a4) edge [very thick] node[fill=none,draw=none] {} (b7);
  \path (a5) edge [very thick] node[fill=none,draw=none] {} (b2);
  \path (a5) edge [very thick] node[fill=none,draw=none] {} (b6);
  \path (a5) edge [very thick] node[fill=none,draw=none] {} (b7);
  \path (a6) edge [very thick] node[fill=none,draw=none] {} (b3);
  \path (a6) edge [very thick] node[fill=none,draw=none] {} (b5);
  \path (a6) edge [very thick] node[fill=none,draw=none] {} (b7);
  \path (a7) edge [very thick] node[fill=none,draw=none] {} (b4);
  \path (a7) edge [very thick] node[fill=none,draw=none] {} (b5);
  \path (a7) edge [very thick] node[fill=none,draw=none] {} (b6);

  \path (b1) edge [very thick] node[fill=none,draw=none] {} (c2);
  \path (b1) edge [very thick] node[fill=none,draw=none] {} (c3);
  \path (b1) edge [very thick] node[fill=none,draw=none] {} (c4);
  \path (b2) edge [very thick] node[fill=none,draw=none] {} (c1);
  \path (b2) edge [very thick] node[fill=none,draw=none] {} (c2);
  \path (b2) edge [very thick] node[fill=none,draw=none] {} (c5);
  \path (b3) edge [very thick] node[fill=none,draw=none] {} (c1);
  \path (b3) edge [very thick] node[fill=none,draw=none] {} (c3);
  \path (b3) edge [very thick] node[fill=none,draw=none] {} (c6);
  \path (b4) edge [very thick] node[fill=none,draw=none] {} (c1);
  \path (b4) edge [very thick] node[fill=none,draw=none] {} (c4);
  \path (b4) edge [very thick] node[fill=none,draw=none] {} (c7);
  \path (b5) edge [very thick] node[fill=none,draw=none] {} (c2);
  \path (b5) edge [very thick] node[fill=none,draw=none] {} (c6);
  \path (b5) edge [very thick] node[fill=none,draw=none] {} (c7);
  \path (b6) edge [very thick] node[fill=none,draw=none] {} (c3);
  \path (b6) edge [very thick] node[fill=none,draw=none] {} (c5);
  \path (b6) edge [very thick] node[fill=none,draw=none] {} (c7);
  \path (b7) edge [very thick] node[fill=none,draw=none] {} (c4);
  \path (b7) edge [very thick] node[fill=none,draw=none] {} (c5);
  \path (b7) edge [very thick] node[fill=none,draw=none] {} (c6);

\end{tikzpicture}
\caption{An illustration of ${\cal P}_2$ together with its mirror.\label{fig:mirror}}
\end{figure}
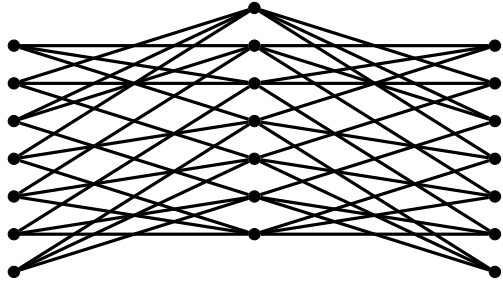

Directly, however, this creates cycles of length $4$ (from vertices of degree $2$), so we need to segregate the embedding of ${\cal P}_q$ and its mirror (i.e.~${\cal P}_q$ with $A$ and $B$ switched).
More precisely, suppose that we want to use ${\cal P}_q$ as a template for the edges between $U^{(0)}$ and $U^{(1)}$ (as in the construction of $G$ in Theorem~\ref{thm:circularmain}). For this, we change symbols (chosen from $[q^2+q+1]$) at the zeroth coordinate in one step according to ${\cal P}_q$ between $U^{(0)}$ and $U^{(1)}$, and change the zeroth coordinate in a second step according to the mirror of ${\cal P}_q$ but later in the cycle, say by adding a new part $U'^{(0)}$ after $U^{(2)}$ in the cycle and adding edges between $U^{(2)}$ and $U'^{(0)}$ according to the mirror of ${\cal P}_q$. Although this adds one more part to the cycle of $U^{(j)}$s, it avoids cycles in $G$ of length $4$ and appropriately mimics the distance properties of a good conduit with parameter $\tau=2$.
We can also interleave when we want to embed ${\cal P}_q$ and its mirror for two or three coordinates.

In all, the girth we obtain for this modification of Theorem~\ref{thm:circularmain} satisfies
$\girth(G) \ge \min\{\lambda+\iota,6,\min_i \gamma_i\}$, where $\iota = |\{i | \tau_i = 2\}| \ge 1$, provided that $\lambda \ge 3$ if $\iota = 1$.

\bigskip
For $t\ge 6$, it is possible to improve on the construction in Theorem~\ref{thm:circularmain} either in terms of the girth of $G$ or $\chi(G^t)$ by applying a similar modification as above but instead to good conduits.
In particular, we can ``unfold'' ${\cal Q}_q$ or ${\cal H}_q$ into three copies (one of which is mirrored), or possibly five copies (two of which are mirrored) in the case of ${\cal H}_q$, and distribute the embeddings of these copies around the cycle, doing this for all coordinates. By unfolding into an odd number ($\le \tau$) of parts, the distance properties of the construction are unhindered.
 If these embeddings are interleaved so that no two embeddings of the same coordinate are at distance at most $1$ in the cycle  $U^{(0)}U^{(1)}\cdots U^{(\lambda)}U^{(0)}$, then the same analysis for girth at the end of the proof of Theorem~\ref{thm:circularmain} applies. If they are interleaved so that no two embeddings of the same coordinate are at distance $0$, then cycles of length $4$ do not occur but cycles of length $6$ may well occur.

Furthermore, when all of the coordinates are unfolded into the same number (three or five) of copies and these are distributed evenly so that each segment of length $\lambda$ contains exactly one embedding for each coordinate, and the good conduits satisfy a symmetry condition (self-duality) that we describe formally in Section~\ref{sec:cliques}, then $U^{(0)} \cup U^{(\lambda)} \cup U^{(2\lambda)} \cup \cdots$ induces a clique in the $t$-th power, increasing the bound on $\chi(G^t)$ (by a factor $3$ or $5$).

\bigskip
The above modifications do not affect the parity of the main cycle, so we still have that the construction is bipartite if and only if $t$ is even.
We shall describe a few further special improvements upon Theorem~\ref{thm:circularmain} within the proof of Theorem~\ref{thm:constructions}.

\section{Cliques in $G^t$ from good conduits}\label{sec:cliques}

Good conduits of parameters $(\tau,\Delta,\gamma,c)$ are themselves very nearly cliques in the $\tau$\textsuperscript{th} power and indeed there are two simple ways to modify them to create such cliques. This yields better bounds for $\chi^t_g(d)$ in a few situations when $t\in\{3,5\}$.

The first idea is to contract a perfect matching, thereby merging the parts.

\begin{proposition}\label{prop:contraction}
Let $H$ be a good conduit of parameters $(\tau,\Delta,\gamma,c)$ and let $A = \{a_1,\dots,a_n\}$, $B = \{b_1,\dots,b_n\}$ be a matching ordering of $H$.
The graph $\mu(H)$, which we call the {\em matching contraction of $H$}, formed from $H$ by contracting every edge $a_ib_i$, $i\in\{1,\dots,n\}$ and ignoring any duplicate edges satisfies the following properties: $\mu(H)$ has maximum degree $2\Delta-2$, girth at least $\gamma/2$, $n (\ge c\Delta^\tau)$ vertices, and between every two vertices in $\mu(H)$ there is a path of length at most $\tau$.
\end{proposition}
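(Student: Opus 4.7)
The plan is to verify the four asserted properties of $\mu(H)$ in turn. The count of vertices, bound on maximum degree, and path-distance claim each reduce to direct inspection; the girth claim is the only nontrivial step and comes from a subgraph-lifting argument.

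First, since the contraction identifies the $n$ disjoint edges of a perfect matching, $\mu(H)$ has exactly $n$ vertices, and $n \ge c\Delta^\tau$ by definition of a good conduit. The vertex $v_i$ produced by contracting $a_ib_i$ inherits its incidences from the non-matching edges at $a_i$ and at $b_i$, contributing at most $(\Delta-1) + (\Delta-1) = 2\Delta-2$ neighbours, so $\Delta(\mu(H)) \le 2\Delta-2$. For the distance property, given $v_i, v_j \in V(\mu(H))$, the good conduit property supplies a path in $H$ of length at most $\tau$ from $a_i \in A$ to $b_j \in B$; projecting through the contraction yields a walk from $v_i$ to $v_j$ in $\mu(H)$ of length at most $\tau$, which contains a $v_i$-$v_j$ path of the same or shorter length.

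For the girth, which is the main obstacle, let $C = v_{j_1} v_{j_2} \cdots v_{j_k} v_{j_1}$ be a cycle of length $k$ in $\mu(H)$. For each edge $v_{j_i} v_{j_{i+1}}$ of $C$ (indices mod $k$), select one edge $e_i$ of $H$ between the pairs $\{a_{j_i}, b_{j_i}\}$ and $\{a_{j_{i+1}}, b_{j_{i+1}}\}$ (at least one exists by the definition of the contraction). Let $S$ be the subgraph of $H$ consisting of the $k$ edges $e_1, \dots, e_k$ together with the $k$ matching edges $a_{j_i} b_{j_i}$. Since $j_1, \dots, j_k$ are distinct, $S$ has exactly $2k$ vertices; the $e_i$'s are pairwise distinct (they project to distinct edges of $C$) and none is a matching edge, so $S$ has exactly $2k$ edges. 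The matching edges link each pair internally and the $e_i$'s chain consecutive pairs, so $S$ is connected. Any connected graph with $|V(S)| = |E(S)|$ contains a cycle of length at most $|V(S)| = 2k$; hence $\gamma \le 2k$, giving $k \ge \gamma/2$.

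The only subtlety beyond routine bookkeeping lies in the girth step, where one must present the lifted object as a subgraph rather than a mere closed walk, so as to extract an actual cycle of $H$ of bounded length; the cyclomatic-number calculation on $S$ achieves exactly this.
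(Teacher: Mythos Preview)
Your proof is correct and follows essentially the same approach as the paper's: both lift a short cycle of $\mu(H)$ back to $H$ via the chosen preimage edges together with the matching edges, and then extract a cycle of length at most $2k$ in $H$. The only cosmetic difference is that the paper writes down the resulting closed walk explicitly and asserts it is a cycle, whereas you package the same $2k$ edges into a connected subgraph $S$ with $|V(S)|=|E(S)|=2k$ and invoke the unicyclic count.
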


\begin{proof}
The statements about the maximum degree and number of vertices are trivial to check.
That every pair of vertices is joined by a $\tau$-path follows from the distance properties of $H$ as a good conduit.
Let $\{v_1,\dots,v_n\}$ be an ordering of the vertices such that $v_i$ corresponds to the contracted edge $a_ib_i$ for every $i\in\{1,\dots,n\}$.
For the girth, suppose $C = v_{i_0}v_{i_1}\cdots v_{i_{\ell}}v_{i_0}$ is a cycle of length $\ell$ in $\mu(H)$.
Then, for every $j\in\{1,\dots,\ell\}$, either $a_{i_j}b_{i_{j+1\bmod \ell}}$ or $a_{i_{j+1\bmod \ell}}b_{i_j}$ is an edge of $H$.
Moreover, every $a_{i_j}b_{i_j}$ is an edge of $H$, so by also including at most $\ell$ such edges, we obtain a cycle of length at most $2\ell$ in $H$.
So $\girth(H) \le 2\girth(\mu(H))$, as required.
\end{proof}

\noindent
It is worth noting that the conclusion of the last proposition gives a lower bound on the output girth (of $\gamma/2$); however, it is conceivable that with a suitable matching ordering the girth could be made higher (namely, up to $3\gamma/4-1$), but we have not yet pursued this further.

The second idea is to connect good conduits with parameter $\tau$ end-to-end around a cycle of length $\tau$.
In order to do so, we need that the input good conduit is symmetric. More precisely, we say a good conduit between $A$ and $B$ is {\em self-dual} if there is a bijection $\sigma: A \to B$ such that the mapping for which every element $a \in A$ is mapped to $\sigma(a)$ and every element $b\in B$ is mapped to $\sigma^{-1}(b)$ is an automorphism of the graph.
In other words, a self-dual good conduit has an embedding such that it is isomorphic to its mirror.
Note that this corresponds to the notion of self-duality in generalised polygons, so every self-dual generalised polygon gives rise to a self-dual good conduit. It is known that ${\cal Q}_q$, resp.~${\cal H}_q$, is self-dual when $q$ is a power of $2$, resp.~of $3$, cf.~\cite{PaTh84}.

\begin{proposition}\label{prop:cycle}
Let $H$ be a self-dual good conduit of parameters $(\tau,\Delta,\gamma,c)$ for $\tau\ge 3$.
The graph $\psi(H)$, which we call an {\em $H$-cycle of length $\tau$}, formed by connecting $\tau$ copies of $H$ end-to-end in a cycle with vertices identified according to the self-duality bijection for $H$, is regular of degree $2\Delta$, has girth $\min\{\tau,4\}$, has $\tau n (\ge \tau c\Delta^\tau)$ vertices, and between every two vertices in $\psi(H)$ there is a path of length at most $\tau$.
\end{proposition}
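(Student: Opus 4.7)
The first two claims---regularity and vertex count---are straightforward. Each vertex of $\psi(H)$ sits at one of $\tau$ cyclically ordered positions (each the identified pair $B_{k-1} = A_k$), so it belongs to exactly two consecutive copies of $H$ and contributes $\Delta$ neighbours from each; hence every vertex has degree $2\Delta$. After identifying $B_k$ with $A_{k+1}$ via the self-duality bijection, each copy contributes $n$ fresh vertices, giving a total of $\tau n \geq \tau c \Delta^\tau$.

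For the girth, my key observation is that every edge of $\psi(H)$ shifts the position index by $\pm 1 \pmod{\tau}$, so the shifts along any closed walk of length $\ell$ must sum to $0 \pmod{\tau}$. A 3-cycle would require three $\pm 1$'s summing to $0 \pmod{\tau}$, but such a sum is odd with absolute value at most $3$, which is impossible when $\tau \geq 5$; combined with the absence of cycles of length less than $3$ (as $\psi(H)$ is simple), this gives $\girth(\psi(H)) \geq \min\{\tau, 4\}$. For the matching upper bound I would exhibit a $3$-cycle when $\tau = 3$ (the good-conduit property yields an $H$-walk of length $3$ from any $a \in A$ to $\sigma(a) \in B$, which lifts to a $3$-cycle on positions $0, 1, 2$ of $\psi(H)$) and a $4$-cycle when $\tau \geq 4$ (take distinct $A$-vertices $a, w$ at $H$-distance $2$ with common $B$-neighbour $b$; by self-duality, $\sigma(a)$ and $\sigma(w)$ have common $A$-neighbour $\sigma^{-1}(b)$, giving a $4$-cycle on positions $0, 1, 0, \tau - 1$ with labels $a, \sigma^{-1}(b), w, \sigma^{-1}(b)$).

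The subtlest part will be the distance bound. My plan is to introduce an auxiliary undirected graph $H'$ on vertex set $A$, with $\{a, a'\}$ an edge whenever $a \sim \sigma(a')$ in $H$ (well-defined by the symmetry $a \sim \sigma(a') \Leftrightarrow \sigma(a) \sim a'$ coming from self-duality). The crucial factorisation I will prove is that a $\psi(H)$-walk of length $k$ from $(a, 0)$ to $(a', \delta)$ is equivalent to the pair consisting of (i) an $H'$-walk of length $k$ from $a$ to $a'$ and (ii) a position trajectory (a sequence of $k$ steps $\pm 1$ in $\mathbb{Z}/\tau\mathbb{Z}$) summing to $\delta$; crucially, these two ingredients are independent of one another, because the edge condition $a_i \sim \sigma(a_{i+1})$ does not depend on the sign of the position shift. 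Taking $k = \tau$, I then verify: (i) an $H'$-walk of length $\tau$ from $a$ to $a'$ can be extracted from an $H$-walk of length $\tau$ from $a \in A$ to $\sigma(a') \in B$, which exists because $d_H(a, \sigma(a')) \leq \tau$ by the good-conduit property and both values have odd parity; (ii) a position trajectory of length $\tau$ can end at any residue $\delta \in \mathbb{Z}/\tau\mathbb{Z}$, since for $\tau$ odd the sums $\sum_{i=1}^\tau s_i$ with $s_i \in \{\pm 1\}$ hit every residue modulo $\tau$. Shortening the resulting length-$\tau$ walk to a path yields $d_{\psi(H)}((a, 0), (a', \delta)) \leq \tau$.

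The main technical point I will double-check is the equivalence between $H'$-walks and $H$-walks. This relies on repeatedly applying the self-duality automorphism to turn $a_i \sim \sigma(a_{i+1})$ into $\sigma(a_i) \sim a_{i+1}$, so that the alternating sequence $a_0, \sigma(a_1), a_2, \sigma(a_3), \ldots$ is a genuine $H$-walk; once this is set up cleanly, the proof reduces to the good-conduit distance bound together with elementary parity and surjectivity arguments on $\mathbb{Z}/\tau\mathbb{Z}$.
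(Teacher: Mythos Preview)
Your argument is correct and agrees with the paper's approach in spirit, but you have made explicit what the paper leaves as a one-line assertion. The paper simply states that same-part vertices are joined by a $\tau$-path ``by traversing through exactly one full revolution of the cycle'' (this is your case $\delta=0$), and that different-part vertices are joined ``by the parity of $\tau$ together with the self-duality of $H$ \ldots possibly by changing directions several times along the cycle''. Your auxiliary graph $H'$ and the decoupling of the label walk from the position trajectory is precisely the mechanism that justifies this second assertion: self-duality is what makes the edge relation $(a,k)\sim(a',k\pm1)\Leftrightarrow a\sim_H\sigma(a')$ insensitive to the sign of the position step, and the oddness of $\tau$ is what lets $\tau$ steps of $\pm1$ realise every residue modulo $\tau$. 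So the two proofs are the same idea, but your factorisation through $H'$ is a clean way to make the paper's sketch rigorous.

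Two small remarks. First, your girth lower bound says a $3$-cycle is impossible ``when $\tau\ge 5$''; in fact your parity argument already excludes $3$-cycles for $\tau=4$ as well (the possible sums $\pm1,\pm3$ are nonzero modulo $4$), and in any case both you and the paper are tacitly assuming $\tau$ odd throughout, so only $\tau=3$ versus $\tau\ge5$ matters. Second, for the $3$-cycle when $\tau=3$ you invoke an $H$-\emph{walk} of length $3$ from $a$ to $\sigma(a)$; it is worth noting that even if the good-conduit path has length $1$ (i.e.\ $a\sim\sigma(a)$), the lift $(a,0)(a,1)(a,2)(a,0)$ is still a genuine $3$-cycle in $\psi(H)$ because the three positions are distinct.
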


\begin{proof}
The statements about the maximum degree and number of vertices are trivial to check.
There are cycles of length $4$ spanning three parts due to the symmetry of $H$.
There are no triangles unless $\tau=3$.
By the self-duality of $H$, there is a $\tau$-path between every pair of vertices in the same part by traversing through exactly one full revolution of the cycle.
Moreover, by the parity of $\tau$ together with the self-duality of $H$, any two vertices in different parts are joined by a $\tau$-path possibly by changing directions several times along the cycle.
\end{proof}

\section{Summary and conclusion}\label{sec:conclusion}

Let us tie things together for Theorem~\ref{thm:constructions} before proposing further possibilities.

\begin{proof}[Proof of Theorem~\ref{thm:constructions}]
Table~\ref{tab:constructions} explicitly indicates which construction from Section~\ref{sec:circular} or~\ref{sec:cliques} is used in each lower bound for $\chi^t_g(d)$ listed in Theorem~\ref{thm:constructions}.
In the table, we have used the following notation.
The largest prime power not exceeding $d/2-1$ is denoted $q$, so that $2q+2 \sim d$ as $d\to\infty$. The inequalities in rows involving $q$ hold for all $d$ as $d\to\infty$ since the gap between two successive primes $p$ and $p'$ is $o(p)$~\cite{Ing37}. The mirror of a graph $G$ is denoted by $-{G}$.
We have written $({G_0}^{\alpha_0},\ldots,{G_{\lambda-1}}^{\alpha_{\lambda-1}})$ for the circular construction as described in Section~\ref{sec:circular}, with the adjacencies between $U^{(i)}$ and $U^{(i+1 \bmod \lambda)}$ defined according to $G_i$ along the $\alpha_i$-th coordinate. Bracketed factors $3$ and $5$ in the lefthand column require self-duality and are not necessarily valid for all values of $d$ as $d\to\infty$, as we describe below.

In the row for $t=4$ and girth $6$ we use a subgraph of the circular construction. It has vertex set $U^{(0)}\cup U^{(1)}\cup U^{(2)}$, with the edges between $U^{(0)}$ and $U^{(1)}$ embedded according to ${\cal P}_q$ along the zeroth coordinate (as in the circular construction), and edges between $U^{(1)}$ and $U^{(2)}$ embedded according to ${\cal P}_q$ along the first coordinate. There are no edges between $U^{(0)}$ and $U^{(2)}$. By the same arguments used for the circular construction, we conclude that the girth of the graph is $6$. Moreover, the distance properties of ${\cal P}_q$ ensure that $U^{(1)}$ induces a clique in the fourth power.

In the row for $t=4$ and girth $4$, we have an additional factor $2$, which is justified with the observation that $U^{(0)} \cup U^{(2)}$ induces a clique in the fourth power. This holds similarly for $U^{(0)} \cup U^{(1)}$ in the row for $t=7$.

For the third-to-last row, it is easily checked that, if $t=9$  or $t\ge 11$, then $t$ is expressible as a sum of at least three terms in $\{3,5\}$, so that the circular construction as per Theorem~\ref{thm:circularmain} need only be composed using ${\cal Q}_{q}$'s and ${\cal H}_{q}$'s. Then, as described at the end of Section~\ref{sec:circular}, we can unfold each coordinate into three copies, distributed evenly around the cycle, to achieve a girth $8$ construction. Optionally, if we use ${\cal Q}_{q_2}$ and ${\cal H}_{q_3}$, where $q_2$ is a power of $2$ and $q_3$ is a power of $3$ , then the use of self-dual embeddings ensures that we can freely change direction around the main cycle so that $U^{(0)} \cup U^{(\lambda)} \cup U^{(2\lambda)}$ induces a clique in the $t$-th power. By choosing $q_2$ and $q_3$ of similar magnitude (say, by using arbitrarily fine rational approximations of $\log_2 3$), we see that the factor $3$ improvement in the inequality holds for infinitely many $d$. This also explains the girth $6$ constructions for $t=6$ and $t=8$. A similar argument, where we instead unfold each coordinate into five copies, applies for the last two rows.
\end{proof}

\begin{table}
\centering
\begin{tabular}{| p{0.228\textwidth} | p{0.7\textwidth}|}
\hline
Bound & Construction that certifies the bound \\
\hline
$\chi^2_6(d) \gtrsim d^2$ & ${\cal P}_{q'}$, with $q'$ the largest prime power at most $d+1$ \\
\hline
$\chi^3_3(d) \gtrsim 3d^3/2^3$ & A ${\cal Q}_{q'}$-cycle of length $3$, with $q'$ the largest power of $2$ at most $d/2-1$ \\
\hline
$\chi^3_4(d) \gtrsim d^3/2^3$ & A matching contraction of ${\cal Q}_{q}$ \\
\hline
$\chi^4_4(d) \gtrsim 2d^4/2^4$ & $\left({{\cal P}_{q}}^0,-{{{\cal P}_{q}}^0},{{\cal P}_{q}}^1,-{{{\cal P}_{q}}^1}\right)$\\
\hline
$\chi^4_6(d) \gtrsim d^4/2^4$ & A ``non-circular'' $\left({{\cal P}_{q}}^0, {{\cal P}_{q}}^1\right)$ (see proof)\\
\hline
$\chi^5_4(d) \gtrsim 5d^5/2^5$ & An ${\cal H}_{q'}$-cycle of length $5$, with $q'$ the largest power of $3$ at most $d/2-1$ \\
\hline
$\chi^5_6(d) \gtrsim d^5/2^5$ & A matching contraction of ${\cal H}_{q}$ \\
\hline
$\chi^6_6(d) \gtrsim (3)d^6/2^6$ & $\left({{\cal Q}_{q}}^0,{{\cal Q}_{q}}^1\right)$, each coordinate unfolded into three copies \\
\hline
$\chi^7_6(d) \gtrsim 2d^7/2^7$ & $\left({{\cal Q}_{q}}^0,{{\cal P}_{q}}^1,-{{{\cal Q}_{q}}^0},-{{{\cal P}_{q}}^1},{{\cal P}_{q}}^2,{{\cal Q}_{q}}^0,-{{{\cal P}_{q}}^2} \right)$ \\
\hline
$\chi^8_6(d) \gtrsim (3)d^8/2^8 $ & $\left({{\cal Q}_{q}}^0,{{\cal H}_{q}}^1\right)$, each coordinate unfolded into three copies \\
\hline
$\chi^{t}_8(d) \gtrsim (3)d^t/2^t$, $t = 9$ or $t \ge 11$ & A circular construction with at least three $\tau_j$'s chosen from $\{3,5\}$ such that they sum to $t$, each coordinate unfolded into three copies\\
\hline
$\chi^{10}_6(d) \gtrsim (5)d^t/2^t$ &  A circular construction composed of two ${\cal H}_{q}$'s, 
each coordinate unfolded into five copies \\
\hline
$\chi^{t}_8(d) \gtrsim (5)d^t/2^t$, $t \ge 15$, $5 | t$ &  A circular construction composed only of ${\cal H}_{q}$'s, 
each coordinate unfolded into five copies \\
\hline
\end{tabular}
\caption{A list of constructions used in the proof of Theorem~\ref{thm:constructions}.\label{tab:constructions}}
\end{table}

Our work is a first systematic attempt at the problem of Alon and Mohar, although their conjecture --- which says for every positive $t$ there is a critical girth $g_t$ such that $\chi^t_{g_t}(d) = \Theta(d^t)$ and $\chi^t_{g_t+1}(d) = \Theta(d^t/\log d)$ --- remains wide open.
Because of the reliance upon incidence structures, it seems unlikely that our methods or similar ones could produce constructions of girth higher than $12$ or $16$.
We suspect though that $g_t$ exists and is linear in $t$. Irrespective of the existence or value of $g_t$, we conjecture the following in relation to Theorem~\ref{thm:cyclefree}.
\begin{conjecture}\label{conj:cyclefree}
The largest possible value of the chromatic number $\chi(G^t)$ of $G^t$, taken over all graphs $G$ of maximum degree at most $d$ containing as a subgraph no cycle of length $2t+2$ is $\Theta(d^t/\log d)$ as $d\to \infty$.
\end{conjecture}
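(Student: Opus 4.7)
The plan is to follow the strategy behind Theorem~\ref{thm:cyclefree}: apply Theorem~\ref{thm:AKSsparse} to $\hat G = G^t$ with $\hat\Delta \le d^t$, so it suffices to show that for every vertex $x$ of $G$ the number of ordered $t$-paths $x_0 x_1\cdots x_t$ with both endpoints in the ball $B_t(x)$ is $O(d^{2t-1})$. The Alon--Mohar lower bound $\chi^t_g(d)=\Omega(d^t/\log d)$ holds for every girth and in particular certifies the $\Omega$-direction of the conjecture.

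Decomposing $V(G)$ into BFS layers $A_i = A_i(x)$ around $x$, the easy cases (paths with at least one endpoint in some $A_i$ with $i<t$) already contribute at most $O(d^{2t-1})$, so the work concentrates on $t$-paths with both endpoints in $A_t$, and hence on the structure of $G[A_{t-1}\cup A_t \cup A_{t+1}]$. In the proof of Theorem~\ref{thm:cyclefree} that structure was controlled by three bottleneck types, each excluded via missing cycle lengths in $\{8,10,\ldots,2t+2\}$; here only length $2t+2$ is forbidden, so the same bottlenecks may appear and must be absorbed by a global rather than local argument.

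My proposal is to classify each pair $(u,w)\in A_t\times A_t$ with $d_G(u,w)\le t$ by the length $\ell$ of the shortest cycle contained in $E(T_u)\cup E(T_w)\cup E(P_{uw})$, where $T_u, T_w$ are fixed BFS shortest paths from $x$ to $u, w$ and $P_{uw}$ is a shortest $u$-$w$ path; after accounting for shared subpaths of $T_u,T_w$, this length falls in $\{3,\ldots,2t+2\}$. Pairs that realize a cycle of length exactly $2t+2$ are forbidden outright and contribute $0$. For each smaller $\ell$, the three paths must share subpaths whose total length equals $2t+2-\ell$, which rigidly encodes $(u,w)$ as an $\ell$-cycle together with a bounded number of tails of prescribed total length; the hope is to bound the number of such encodings, for each $\ell$, by $O(d^{2t-1})$ and to sum over the $O(t)$ values of $\ell$.

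The main obstacle is precisely this last step for intermediate $\ell$: without a local cycle prohibition, a single "bad" value of $\ell$ could inflate the count past the $O(d^{2t-1})$ budget, and the natural amortization is delicate. A promising avenue is an induction on $\ell$ (equivalently on the shared-tail length $2t+2-\ell$), coupled with a moment or spectral estimate on closed walks of length $2t+2$ through $x$, all of which must revisit a vertex since no $(2t+2)$-cycle exists; alternatively, one could try to establish a generalized Moore-type bound for single forbidden cycle lengths. A successful proof almost certainly requires a structural insight of this flavour, one qualitatively stronger than Claims~\ref{clm:cyclefree1}--\ref{clm:cyclefree2}, which is presumably why the statement is left as a conjecture.
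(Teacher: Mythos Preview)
The statement you are addressing is Conjecture~\ref{conj:cyclefree}, and the paper offers no proof of it: it is explicitly posed as an open problem following the discussion in Section~\ref{sec:conclusion}. So there is no ``paper's own proof'' to compare your proposal against. You evidently recognise this yourself in your final paragraph, where you note that a qualitatively stronger structural insight than Claims~\ref{clm:cyclefree1}--\ref{clm:cyclefree2} would be needed and that this is presumably why the statement is left as a conjecture.

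That said, your outline is a sensible reconnaissance of the difficulty. The reduction to bounding $t$-paths with both endpoints in $A_t$ is exactly right, and your idea of classifying pairs $(u,w)$ by the length $\ell$ of the cycle formed by $T_u$, $T_w$, and a shortest $u$--$w$ path is a natural refinement of the bottleneck analysis in the proof of Theorem~\ref{thm:cyclefree}. The genuine gap you identify is also the real one: for intermediate $\ell$ (say $\ell\in\{4,6,\ldots,2t\}$), a single forbidden cycle length gives you no local control, and there is no evident reason why the count of such configurations should stay within $O(d^{2t-1})$. Your suggested remedies --- an induction on $\ell$, a moment bound on closed $(2t+2)$-walks, or a Moore-type inequality for one forbidden cycle length --- are plausible directions but none is known to work; indeed, the extremal theory of graphs with a single forbidden even cycle length (as opposed to girth conditions) is notoriously incomplete. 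In short, your proposal is not a proof but an accurate diagnosis of why the conjecture is open.
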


In~\cite{KaKa14}, an edge-colouring version of Alon and Mohar's problem was proposed and studied.  This is related to a well-known conjecture of Erd\H{o}s and Ne\v{s}et\v{r}il, and to frequency allocation problems in ad hoc wireless networks. Techniques in the present paper carry over similarly, but we defer this to follow up work.

Determination of the extremal value of $\chi^t_3(d)$ up to a $1+o(1)$ factor as $d\to\infty$ is very enticing and is closely related to the degree diameter problem. 

Another interesting problem: what is the smallest possible value of the stability number $\alpha(G^t)$ of $G^t$, taken over all graphs $G$ of maximum degree at most $d$ and girth at least $g$?
To our knowledge, this natural extremal problem has not been extensively studied thus far.

\subsection*{Acknowledgements}

We are very grateful to the referees for their helpful comments and suggestions. We particularly thank one of the referees for pointing out Proposition~\ref{prop:cyclefree}.

%%%%%%%%%%%%%%%%%%%%%%%%%%%%%%%%%%%%%%%%%%%%%%%%%%%%%%%%%%%%%%%%%%%%%%

\bibliographystyle{abbrv}
\bibliography{distgirth}
\end{document}